\documentclass[11pt]{article}

\usepackage[top=2.5cm, bottom=2.5cm, left=2.5cm, right=2.5cm]{geometry}
\usepackage{amsfonts}
\usepackage{latexsym}
\usepackage{amsmath}
\usepackage{setspace}
\usepackage{comment}
\usepackage{bm}
\usepackage{graphicx}
\usepackage{subcaption}
\usepackage{epstopdf}
\usepackage{color}
\usepackage{algorithmicx}
\usepackage{algorithm}
\usepackage{algpseudocode}
\usepackage{amsopn}
\usepackage{amssymb}
\usepackage{mathtools}
\usepackage{amsthm}
\usepackage{booktabs}
\usepackage{placeins}
\usepackage{siunitx}

\newtheorem{definition}{Definition}[section]
\newtheorem{lemma}{Lemma}[section]
\newtheorem{assumption}{Assumption}[section]
\newtheorem{theorem}{Theorem}[section]
\newtheorem{proposition}{Proposition}[section]

\newcommand{\mini}{\mathop{\mbox{minimize}}}
\newcommand{\maxi}{\mathop{\mbox{maximize}}}
\newcommand{\subj}{\mbox{subject to}}
\newcommand{\R}{\mathbb{R}}
\newcommand{\M}{\mathcal{M}}
\newcommand{\rmD}{\mathrm{D}}
\newcommand{\grad}{\mathrm{grad}}
\newcommand{\Hess}{\mathrm{Hess}}

\title{A stabilized sequential quadratic programming method \\ for degenerate nonlinear optimization problems \\ on Riemannian manifolds}
\author{Yuya Yamakawa\thanks{Graduate School of Management, Tokyo Metropolitan University, 1-1 Minami-Osawa, Hachioji-shi, Tokyo 192-0397, Japan (yuya@tmu.ac.jp)}, Mamoru Oka\thanks{Graduate School of Informatics, Kyoto University, Yoshida-Honmachi, Sakyo-ku, Kyoto 606-8501, Japan (oka\_0319\_2000@docomo.ne.jp)}}

\begin{document}

\maketitle

\begin{abstract}
We propose a stabilized sequential quadratic programming (SQP) method for degenerate constrained optimization problems on Riemannian manifolds. The problem considered in this study is a Riemannian nonlinear programming problem (RNLP) with equality and inequality constraints, where classical constraint qualifications may fail. While existing Riemannian SQP methods guarantee global convergence only under constraint qualifications, their convergence behavior is not ensured for degenerate problems. To address this limitation, we extend the stabilized SQP framework from Euclidean spaces to Riemannian manifolds. Without assuming any constraint qualification, we prove that the generated sequence has an accumulation point that is a Karush--Kuhn--Tucker (KKT) point, an approximate KKT (AKKT) point, or a stationary point of an associated feasibility problem. Finally, we conduct numerical experiments to confirm the effectiveness of the proposed method for degenerate problems.
\end{abstract}

{\bf keywords:} Riemannian stabilized SQP method, Riemannian optimization, degenerate problem, Riemannian manifold

\section{Introduction}
In this paper, we consider the following optimization problem:
\begin{align}\label{pro:RNLO}
\begin{aligned}
& \mini_{x \in \M} && f(x)
\\
& \subj && g_{i}(x) = 0 ~~(i=1, \ldots, m)
\\
& && h_{j}(x) \geq 0 ~~(j=1, \ldots, \ell)
\end{aligned}
\end{align}
where $\M$ is a $d$-dimensional connected and complete Riemannian manifold, the functions $f \colon \M \to \R$, $g_{i} \colon \M \to \R ~ (i=1, \ldots, m)$, and $h_{j} \colon \M \to \R ~ (j=1,\ldots,\ell)$ are twice continuously differentiable on $\M$, respectively. Moreover, we define the functions $g$ and $h$ as $g(x) = [ g_{1}(x), \ldots, g_{m}(x)]^{\top}$ and $h(x) = [h_{1}(x), \ldots, h_{\ell}(x)]^{\top}$. Problem~\eqref{pro:RNLO} is generally called a Riemannian nonlinear programming problem~(RNLP) and is a natural extension of a nonlinear programming problem~(NLP) in Euclidean spaces. Indeed, problem~\eqref{pro:RNLO} becomes NLP in the case that $\M = \R^{d}$.
\par
RNLPs arise in various fields, such as machine learning and control theory, and have been studied and developed since the 2000s, particularly regarding methods for solving the unconstrained version of RNLPs. Most of these methods are extensions of existing techniques for optimization problems in Euclidean spaces to those in Riemannian manifolds, such as Newton methods~\cite{Absil:2008}, trust-region methods~\cite{Absil:2008}, and conjugate gradient methods~\cite{Sato:2015}. In recent years, RNLPs have been actively studied, and several optimization methods have already been proposed, including Riemannian augmented Lagrangian methods~\cite{Liu:2020}, exact penalty methods~\cite{Liu:2020}, Riemannian sequential quadratic programming~(RSQP) methods~\cite{Obara:2022}, and primal-dual interior point methods~\cite{Lai:2024}.
\par
The purpose of most optimization methods is to find a KKT point that satisfies the Karush-Kuhn-Tucker~(KKT) conditions. The KKT conditions are the first-order optimality conditions, and it is generally known that the KKT conditions serve as first-order necessary optimality conditions only under an appropriate constraint qualification~(CQ), such as the Mangasarian-Fromovitz CQ~(MFCQ), the Robinson CQ~(RCQ), and the linear independence CQ~(LICQ). However, for degenerate problems for which standard CQs fail, there is a possibility that their local optima do not satisfy the KKT conditions, and it is difficult for ordinary optimization methods to find their KKT points. To overcome such a difficulty, the Approximate Gradient Projection~(AGP) conditions~\cite{Martinez:2003} and Approximate KKT~(AKKT) conditions~\cite{Andreani:2011} have been proposed for NLPs since the 2000s. They are satisfied at all local optima regardless of whether CQs hold or not and are often called sequential optimality conditions because some sequences are used in their definitions. In recent years, the concept of sequential optimality conditions has been extended to nonlinear second-order cone programming~\cite{Andreani:2024}, nonlinear semidefinite programming~(NSDP)~\cite{Andreani:2020}, RNLP~\cite{Yamakawa:2022}, and optimization problems in function spaces~\cite{Kanzow:2018}.
\par
Sequential quadratic programming~(SQP) methods are known as powerful optimization methods for solving nonlinear optimization problems. They iteratively solve quadratic programming problems obtained by approximating the objective quadratically and the constraint functions linearly, and several types of them have been proposed so far~\cite{Byrd:2008,Izmailov:2010,Wright:1998}. Moreover, SQP methods have been extended to NSDPs~\cite{Correa:2004,Fares:2002} and RNLPs~\cite{Obara:2022}.
In particular, Wright~\cite{Wright:1998} proposed a stabilized SQP method based on the classical SQP method. This method was proposed for solving degenerate problems and it iteratively solves the stabilized quadratic subproblems designed for degenerate problems. Inspired by Wright~\cite{Wright:1998}, many researchers have proposed several stabilized SQP methods, such as Hager~\cite{Hager:1999}, Gill and Robinson~\cite{Gill:2013}, Izmailov et al.~\cite{Izmailov:2015}, and Gill et al.~\cite{Gill:2017}. Moreover, stabilized SQP methods have been extended to nonlinear semidefinite programming~\cite{Yamakawa:2022-2} and optimization problems in Banach spaces~\cite{Yamakawa:2023,Yamakawa:2026}, but have not yet been proposed for RNLPs. 
\par
In this study, we propose a stabilized SQP method for problem~\eqref{pro:RNLO} and establish global convergence of the proposed method under some assumptions. As stated above, stabilized SQP methods for the optimization problem on Riemannian manifolds have not been studied whereas the ordinary SQP method has been extended to RNLPs by Obara et al.~\cite{Obara:2022}, and hence we believe that the current research is the first of this kind. Moreover, the proposed method is designed to solve degenerate problems and is guaranteed to converge globally to a KKT point, an AKKT point, or a stationary point of an associated feasibility problem without assuming any CQs. Meanwhile, such a convergence property cannot be seen in the RSQP methods proposed by Obara et al.~\cite{Obara:2022}. In the numerical experiments, we confirm that the proposed method is more effective than the existing RSQP method for degenerate problems on Riemannian manifolds.
\par
This paper is organized as follows. In Section~\ref{sec:Preliminaries}, we introduce some terminology related to optimality conditions and CQs. Section~\ref{sec:SQP_method} outlines an ordinary SQP method and its variants. In Section~\ref{sec:SSQP}, we propose a stabilized SQP method on a Riemannian manifold and prove its global convergence. Section~\ref{sec:Numerical_experiments} reports numerical experiments and confirms the effectiveness of the proposed method for degenerate problems.
Conclusions and future work are stated in Section~\ref{sec:conclusion}.

\section{Preliminaries}\label{sec:Preliminaries}
Throughout this paper, we use the following notation. The set of positive integers is denoted by ${\mathbb{N}}$. Let $p \in \mathbb{N}$ be an integer. The $p$-dimensional Euclidean space is denoted by $\R^{p}$. We define $\R^{p}_{+} \coloneqq \{x \in \R^{p} ; x \geq 0\}$. The tangent space of $\M$ at $x$ is defined by $T_{x}{\M}$. The tangent bundle of $\M$ is denoted by $T \M$. The retraction on $\M$ is written as $R \colon T \M \to \M$. In particular, we denote by $R_{x}$ the restriction of $R$ to $T_{x} \M$. The Riemannian metric is denoted by $\{ \langle \cdot, \cdot \rangle_{x} \}_{x \in \M}$, where $\langle \cdot, \cdot \rangle_{x} \colon T_{x} \M \times T_{x} \M \rightarrow \R$. The Riemannian metric induces the norm $\Vert \xi \Vert_{x} \coloneqq \sqrt{\langle \xi, \xi \rangle_{x}}$ for $\xi\in T_{x}\M$. Let $a \in \R^{p}$, $b \in \R^{p}$, and $v \in \R^{p}$ be vectors. We define $\langle a, b \rangle \coloneqq a^{\top} b$ and $\| v \| \coloneqq \sqrt{\langle v, v \rangle}$. If $a$ and $b$ are scalars, $\mathrm{max}(a,b)$ represents the larger one of $a$ and $b$. The scalar $[v]_i$ represents the $i$-th element of $v$. 
We denote $[v]_{+} \coloneqq [\max([v]_1,0), \ldots, \max([v]_p, 0)]^{\top}$. The Riemannian distance on ${\M}$ is denoted by $d \colon \M \times \M \rightarrow \R$. Throughout the paper, we assume that $(\M, d)$ is a complete metric space. For any $v \in \M$ and $\eta > 0$, we denote $B(v, \eta) \coloneqq \{ x \in \M; d(x, v) \leq \eta \}$.
Let $x \in \M$, and let $\phi \colon \M \to \R$ and $\Phi \colon \M\rightarrow \R^{p}$ be functions. If $\phi$ has a Riemannian gradient at $x$, it is denoted by $\grad \phi(x)$. If $\Phi$ has a derivative at $x$, it is represented by $\rmD \Phi(x)$. Let $\psi \colon \M \times \R^{p} \to \R \colon (x, y) \mapsto \psi(x,y)$ be a function. 
Let $\nabla$ be the Levi-Civita connection on $\M$. 
If $\phi$ has a Riemannian Hessian at $x$, it is denoted by $\Hess \phi(x)$, that is, $\Hess \phi(x)[\xi] = \nabla_{\xi} \grad \phi(x)$ for all $\xi \in T_{x} \M$. If $\M$ is the Euclidean space, the symbol $\nabla$ denotes the Euclidean gradient for real-valued functions, namely, we represent $\grad \phi(x)$ as $\nabla \phi(x)$. For a closed convex set $C \subset \R^{p}$, we denote by $P_{C}(x)$ the metric projection of $x \in \R^{p}$ onto $C$. For any $S \subset \R^{p}$, the interior of $S$ is denoted by $\mathrm{int}(S)$.
\par
Now, we will introduce several optimality conditions for problem~\eqref{pro:RNLO}. To this end, the Lagrange function is defined as follows:
\begin{align*}
L(x,y,z) \coloneqq f(x) - y^{\top} g(x) - z^{\top} h(x),
\end{align*}
where $y \in \R^{m}$ and $z \in \R^{\ell}$ are Lagrange multipliers for $g(x) = 0$ and $h(x) \geq 0$, respectively. Note that the Riemannian gradient and the Riemannian Hessian of $L$ with respect to $x$ are denoted by $\grad_{x} L(x,y,z)$ and $\Hess_{x} L(x,y,z)$, respectively.

\begin{definition}[{\cite[Definition 2.3]{Liu:2020}}]
The KKT conditions for problem~\eqref{pro:RNLO} hold at $x \in \M$ if there exist $y\in \R^{m}$ and $z \in \R^{\ell}$ such that      
\begin{gather*}
\grad_{x} L(x, y, z) = 0, \quad g(x)=0, \quad h(x)\geq 0, \quad z\geq 0, \quad \langle z, h(x) \rangle = 0.
\end{gather*}
\end{definition}
A point $x \in \M$ satisfying the KKT conditions is called a KKT point. The KKT conditions are well known as the first-order optimality conditions. However, for degenerate problems, there is a possibility that their local optima do not satisfy the KKT conditions. In other words, the KKT conditions provide the necessary optimality for~\eqref{pro:RNLO} under some CQ.
\par
In Riemannian optimization, several CQs have been extended. For example, the definitions of the linear independence CQ~(LICQ) and the Mangasarian-Fromovitz CQ~(MFCQ) are given by Bergmann and Herzog~\cite{Bergmann:2019}, and the definition of the Robinson~CQ~(RCQ) is provided by Yamakawa and Sato~\cite{Yamakawa:2022}. In this study, we introduce the definition of the RCQ.

\begin{definition}[{~\cite[Definition 4]{Yamakawa:2022}}]
We say that a feasible point~$x \in \M$ satisfies the RCQ if the following condition holds:
\begin{align} \label{cond:ERCQ}
0 \in \mathrm{int}\left(
\begin{bmatrix}
g(x)
\\
h(x)
\end{bmatrix}
+
\begin{bmatrix}
\rmD g(x)
\\
\rmD h(x)
\end{bmatrix}
T_{x} \M
-
\begin{bmatrix}
\{ 0 \}
\\
\R^{\ell}_{+}
\end{bmatrix}
\right),
\end{align}
where 
\begin{align*}
\begin{bmatrix}
\rmD g(x)
\\
\rmD h(x)
\end{bmatrix}
T_{x} \M
\coloneqq \left\{ 
\begin{bmatrix}
\rmD g(x)\xi
\\
\rmD h(x)\xi
\end{bmatrix}
\in \R^{m + \ell}; \xi \in T_{x} \M \right\}.
\end{align*}
Moreover, if a point $x \in \M$, which is not necessarily feasible for~\eqref{pro:RNLO}, meets condition~\eqref{cond:ERCQ}, then we say that the extended RCQ (ERCQ) holds at $x$.
\end{definition}
\par
Recently, the concept of the approximate KKT~(AKKT) conditions was proposed as the optimality condition for degenerate optimization problems. Yamakawa and Sato proposed the AKKT conditions for Riemannian optimization.   
\begin{definition} {\rm \cite[Definition 5]{Yamakawa:2022}}
The AKKT conditions for problem~\eqref{pro:RNLO} hold at $x \in \M$ if $g(x) = 0$, $h(x) \geq 0$, and there exist $\{ x_k \} \subset \M$, $\{ y_{k} \} \subset \R^{m}$, and $\{ z_{k} \}\subset \R^{\ell}_{+}$ such that
\begin{gather*}
\lim_{k \to \infty} x_{k} = x, \quad \lim_{k \to \infty} \| \grad_{x} L(x_{k}, y_{k}, z_{k}) \|_{x_k} = 0, \quad \lim_{k \to \infty} \langle z_k, [h(x_k)]_{+} \rangle = 0.
\end{gather*}
\end{definition}
We call $x \in \M$ an AKKT point if the AKKT conditions are satisfied at $x$. Moreover, we call $\{ ( x_{k}, y_{k}, z_{k})\} \subset \M \times \R^{m} \times \R_{+}^{\ell}$ that are used for defining an AKKT point $x$ an AKKT sequence corresponding to $x$. The AKKT conditions are satisfied at all local optima regardless of whether CQs hold or not, where we notice that this property is satisfied under the completeness of $\M$. For details, see~\cite[Theorem~1]{Yamakawa:2022}. They are often called sequential optimality conditions because some sequences are used in their definitions.

\section{SQP-type methods}\label{sec:SQP_method}
This section provides a brief outline of the SQP-type method for the NLP and RNLP. We first introduce the ordinary SQP methods for the following NLP~\eqref{pro:NLP}, which is obtained by $\M = \mathbb{R}^{n}$.
\begin{align}
\begin{aligned}\label{pro:NLP}
& \mini_{x \in \R^{n}} && f(x)
\\
& \subj && g(x) = 0,
\\
& && h(x) \geq 0. 
\end{aligned}
\end{align}
In general, the ordinary SQP methods iterate the following three steps.
\begin{description}
\item[{\rm Step 1:}] Obtain a search direction and new Lagrange multipliers by solving a quadratic subproblem.

\item[{\rm Step 2:}] Choose a step size based on a certain condition, such as the Armijo condition.

\item[{\rm Step 3:}] Update the main iterate and parameters.
\end{description}
Let $k$ denote the iteration number. In the first step of the ordinary SQP methods, the following subproblem~\eqref{pro:Euc_SQP} is solved.
\begin{align}
\begin{aligned}\label{pro:Euc_SQP}
& \mini_{\xi \in \R^{n}} && \langle \nabla f(x_{k}), \xi \rangle + \frac{1}{2} \langle H_{k} \xi, \xi \rangle
\\
& \subj && g(x_{k}) + \nabla g(x_{k})^{\top} \xi = 0,
\\
& && h(x_{k}) + \nabla h(x_{k})^{\top} \xi \geq 0,
\end{aligned}  
\end{align}
where $H_{k}$ represents $\nabla_{xx}^{2} L(x_k,y_k,z_k)$ or its approximation. This problem is derived from the second-order approximation regarding the min-max problem of the following Lagrange function. In the following, we briefly explain how to derive subproblem~\eqref{pro:Euc_SQP}. Note that NLP~\eqref{pro:NLP} is equivalent to 
\begin{align}
\mini_{x \in \R^{n}} ~ \maxi_{(y,z) \in \R^{m} \times \R_{+}^{\ell}} ~ L(x,y,z). 
\end{align}
Now, we consider the second-order approximation of $L$ with respect to $x$, that is,
\begin{align}\label{pro:second-order}
\begin{aligned}
& \mini_{\xi \in \R^{n}} ~ \maxi_{(\zeta, \eta) \in \R^{m} \times \R_{+}^{\ell}} ~ \langle \nabla f(x_{k}), \xi \rangle + \frac{1}{2} \langle \nabla_{xx}^{2} L(x_{k}, y_{k}, z_{k}) \xi, \xi \rangle 
\\
& \hspace{35mm} - \zeta^{\top}(g(x_{k}) + \nabla g(x_{k})^{\top} \xi) - \eta^{\top}(h(x_{k}) + \nabla h(x_{k})^{\top} \xi). 
\end{aligned}
\end{align}
Then, we can easily see that~\eqref{pro:second-order} is equivalent to~\eqref{pro:Euc_SQP} if $H_k=\nabla_{xx}^2L(x_k,y_k,z_k)$. Namely, the subproblem of the ordinary SQP methods is derived from the second-order approximation of min-max problem of the Lagrange function. The ordinary SQP methods find a KKT point $(\xi^{\ast}, \zeta^{\ast}, \eta^{\ast})$, set a search direction by $p_{k} \coloneqq \xi^{\ast}$, and update the Lagrange multipliers $(y_{k}, z_{k})$ by $(y_{k+1}, z_{k+1}) \coloneqq (\zeta^{\ast}, \eta^{\ast})$. 
\par
In the second step, the step size is determined. In particular, the ordinary SQP methods often use Armijo's line search with a merit function to establish the global convergence. 
Although several merit functions have been proposed, the following merit function is one of the most well-known.
\begin{align} \label{merit_well-known}
\phi_{\rho}(x) \coloneqq f(x) + \rho \left( \sum_{i=1}^{m} |g_i(x)| + \sum_{j=1}^{\ell} \max \{ -h_{j}(x), 0 \} \right),
\end{align}
where $\rho > 0$ is a penalty parameter.     
\par
In the third step, by using the step size $\alpha_{k} > 0$, the main iterate is updated as $x_{k+1} \coloneqq x_k + \alpha_{k} p_{k}$. Moreover, the parameters are updated appropriately, and the method returns to the first step.
\par
The ordinary SQP methods have various variants, such as inexact SQP methods~\cite{Byrd:2008}, truncated SQP methods~\cite{Izmailov:2010}, and stabilized SQP~(SSQP) methods~\cite{Gill:2013,Wright:1998}. In particular, the SSQP methods have been actively studied for degenerate problems. The SSQP methods adopt different subproblems from the ordinary SQP methods, whereas they iterate the same three steps described above. The subproblem of the SSQP methods is given as follows: 
\begin{align}
\begin{aligned} \label{pro:Euc_SSQP}
& \mini_{(\xi,\zeta,\eta) \in V} && \langle \nabla f(x_{k}), \xi \rangle + \frac{1}{2} \langle H_{k} \xi, \xi \rangle + \frac{\sigma_{k}}{2} \Vert \zeta \Vert^{2} + \frac{\sigma_{k}}{2} \Vert \eta \Vert^{2}
\\
& \subj && g(x_{k}) + \nabla g(x_{k})^{\top} \xi + \sigma_{k} (\zeta - y_{k}) = 0,
\\
&&& h(x_{k}) + \nabla h(x_{k})^{\top} \xi + \sigma_{k} (\eta - z_{k}) \geq 0.
\end{aligned}
\end{align}
where $V \coloneqq \R^{n} \times \R^{m} \times \R^{\ell}$ and $\sigma_{k} > 0$ is a penalty parameter. If $H_k = \nabla_{xx}^{2} L(x_k,y_k,z_k)$, this is equivalent to the following problem, which is obtained by adding the proximal terms $\frac{\sigma_{k}}{2} \Vert \zeta - y_{k} \Vert^{2}$ and $\frac{\sigma_{k}}{2} \Vert \eta - z_{k} \Vert^{2}$ to problem~\eqref{pro:second-order}.
\begin{align}
\begin{aligned}
& \mini_{\xi \in \R^{n}} ~ \maxi_{(\zeta, \eta) \in \R^{m} \times \R_{+}^{\ell}} ~ \, \langle \nabla f(x_{k}), \xi \rangle + \frac{1}{2} \langle \nabla_{xx}^{2} L(x_{k}, y_{k}, z_{k}) \xi, \xi \rangle
\\
& \hspace{45mm} - \zeta^{\top}(g(x_{k}) + \nabla g(x_{k})^{\top} \xi) - \frac{\sigma_{k}}{2} \Vert \zeta - y_{k} \Vert^{2}
\\
& \hspace{55mm} - \eta^{\top}(h(x_{k}) + \nabla h(x_{k})^{\top} \xi) - \frac{\sigma_{k}}{2} \Vert \eta - z_{k} \Vert^{2}. 
\end{aligned}
\end{align}
The proximal terms $\frac{\sigma_{k}}{2} \Vert \zeta - y_{k} \Vert^{2}$ and $\frac{\sigma_{k}}{2} \Vert \eta - z_{k} \Vert^{2}$ help prevent the Lagrange multipliers from becoming unbounded, and hence it is generally known that they work well for degenerate problems.


\par
Recently, Obara et al.~\cite{Obara:2022} have proposed an SQP method, which is called a Riemannian SQP~(RSQP) method, for RNLPs. This method is a natural extension of the ordinary SQP methods for NLPs and iteratively solves the following subproblem at the $k$-th iteration. 
\begin{align}
\begin{aligned}\label{pro:Rei_SQP}
& \mini_{\xi \in T_{x_{k}} \M} && \langle \grad f(x_{k}), \xi \rangle_{x_{k}} + \frac{1}{2} \langle H_{k}[\xi], \xi \rangle_{x_{k}}
\\
& \subj && g(x_{k}) + \rmD g(x_{k}) [\xi] = 0,
\\
& && h(x_{k}) + \rmD h(x_{k}) [\xi] \geq 0,
\end{aligned}
\end{align}
where $H_{k}$ denotes $\Hess_{x} L(x_{k}, y_{k}, z_{k})$ or its approximation. Moreover, the following merit function~$P_{\rho_k}(x)$ is used in Armijo's line-search, which is proposed by Liu and Boumal~\cite{Liu:2020}:
\begin{align*}
P_{\rho_k}(x) \coloneqq f(x) + \rho_k \left( \sum_{i=1}^{m} |g_i(x)| + \sum_{i=1}^{\ell} \max (0, -h_i(x)) \right), 
\end{align*}
where $\rho_k$ is a penalty parameter. This function is also a natural extension of~\eqref{merit_well-known}. Obara et al.~\cite{Obara:2022} showed global convergence and local fast convergence of the RSQP method.
\par
In this study, we propose a stabilized Riemannian SQP~(RSSQP) method for problem~\eqref{pro:RNLO}. The subproblem of the RSSQP method~\eqref{pro:RNLOpar} is a natural extension of the SSQP methods for NLPs, and thus it is provided as follows:
\begin{align}
\begin{aligned}\label{pro:RNLOpar}
& \mini_{(\xi, \zeta, \eta) \in {\cal V}_{k}} && \langle \grad f(x_{k}), \xi \rangle_{x_{k}} + \frac{1}{2} \langle H_{k}[\xi], \xi \rangle_{x_{k}} + \frac{\sigma_{k}}{2} \Vert \zeta \Vert^{2} + \frac{\sigma_{k}}{2} \Vert \eta \Vert^{2}
\\
& \subj && g(x_{k}) + \rmD g(x_{k}) [\xi] + \sigma_{k}( \zeta - \widehat{y}_{k} )=0,
\\
&&& h(x_{k}) + \rmD h(x_{k}) [\xi] + \sigma_{k} ( \eta - \widehat{z}_{k} ) \geq 0,
\end{aligned}
\end{align}
where ${\cal V}_{k} \coloneqq T_{x_{k}} \M \times \R^{m} \times \R^{\ell}$, $\sigma_{k} >0$ is a penalty parameter, and $\widehat{y}_k \in \R^m$ and $\widehat{z}_k \in \R^{\ell}$ are auxiliary Lagrange multipliers. By removing equality constraints, we can reformulate subproblem~{\eqref{pro:RNLOpar}} into the following problem:
\begin{align}
\begin{aligned}\label{pro:RNLOparch}
& \mini_{(\xi, \eta) \in {\cal W}_{k}} && \langle \grad f(x_{k}) - \rmD g(x_{k})^{\ast}[s_{k}], \xi \rangle_{x_{k}} + \frac{1}{2} \langle M_{k}[\xi], \xi \rangle_{x_{k}} + \frac{\sigma_{k}}{2} \Vert \eta \Vert^{2}
\\
& \subj && \rmD h(x_{k})[\xi] + \sigma_{k} ( \eta - t_{k} ) \geq 0,
\end{aligned}
\end{align}
where ${\cal W}_{k} \coloneqq T_{x_{k}} \M \times \R^{\ell}$, and $s_{k}$, $t_{k}$, and $M_{k}$ are defined as follows:
\begin{align*}
s_{k} \coloneqq \widehat{y}_{k} - \frac{1}{\sigma_{k}} g(x_{k}), \quad t_{k} \coloneqq \widehat{z}_{k} - \frac{1}{\sigma_{k}}h(x_{k}), \quad M_{k} \coloneqq H_{k} + \frac{1}{\sigma_{k}} \rmD g(x_{k})^{\ast} \rmD g(x_{k}).
\end{align*}
The proposed RSSQP method iteratively finds an optimizer $(\xi_{k}^{\ast}, \eta_{k}^{\ast}) \in T_{x_{k}} \M \times \R^{\ell}$ of~\eqref{pro:RNLOparch} and sets a search direction $p_{k}$ and Lagrange multiplier estimates $(\widetilde{y}_{k+1}, \widetilde{z}_{k+1})$ as
\begin{align}
p_{k} \coloneqq \xi_k^{\ast}, \quad \widetilde{y}_{k+1} \coloneqq \widehat{y}_{k} - \frac{1}{\sigma_{k}}(g(x_{k}) + \rmD g(x_{k}) [\xi_k^{\ast}]), \quad \widetilde{z}_{k+1} \coloneqq \eta_k^{\ast}, \label{newpoints}
\end{align}
respectively. This study focuses on global convergence of the proposed RSSQP method, and hence, we also introduce a merit function used in Armijo's line-search. The proposed merit function is given as
\begin{align}
F(x;y,z,\sigma) \coloneqq f(x) + \frac{1}{2\sigma} \Vert \sigma y - g(x) \Vert^{2} + \frac{1}{2\sigma} \Vert [\sigma z -h(x)]_{+} \Vert^{2},
\end{align}
where $\sigma > 0$ is a penalty parameter. Note that the above function is equivalent to the augmented Lagrangian. The Riemannian gradient of the merit function is given by
\begin{align}
\grad F(x;y,z,\sigma) = \grad f(x) - \rmD g(x)^{\ast}\left[ y - \textstyle \frac{1}{\sigma} g(x) \right] - \rmD h(x)^{\ast} \left[ \left[ z - \textstyle\frac{1}{\sigma}h(x) \right]_{+} \right]. \label{eq:graddef}
\end{align}
In the next section, we describe a formal statement of the proposed RSSQP method and explain how to update the iterates and parameters.


\section{RSSQP method and its global convergence}\label{sec:SSQP}
This section presents an RSSQP method for problem~\eqref{pro:RNLO} and proves its global convergence under milder assumptions without any CQs. First, we define
the functions $\Phi_{\theta} \colon \M \times \R^m \times \R_+^{\ell} \to \R$, $\Psi_{\theta} \colon \M \times \R^m \times \R_+^{\ell} \to \R$, and $r \colon \M \times \R^m \times \R_+^{\ell} \to \R$ as
\begin{align*}
\Phi_{\theta}(x,y,z) &\coloneqq \Vert g(x) \Vert + \Vert [-h(x)]_{+} \Vert + \theta \Vert \grad_{x} L(x,y,z) \Vert_{x} + \theta | \langle z, h(x) \rangle |,
\\
\Psi_{\theta}(x,y,z) &\coloneqq \theta \Vert g(x) \Vert + \theta \Vert [-h(x)]_{+} \Vert + \Vert \grad_{x} L(x,y,z) \Vert_{x} + | \langle z, h(x) \rangle |,
\\
r(x,y,z) &\coloneqq \Vert g(x) \Vert + \Vert [-h(x)]_{+} \Vert + \Vert \grad_{x} L(x,y,z) \Vert_{x} + | \langle z, h(x) \rangle |,
\end{align*}
respectively, where $\theta \in (0,1)$ is a constant. Note that the KKT conditions are satisfied if and only if $\Phi_{\theta}(x,y,z) = \Psi_{\theta}(x,y,z) = r(x,y,z) = 0$. 
\par
Next, we outline the proposed RSSQP method. Although it also consists of the three steps as described in Section~\ref{sec:SQP_method}, the third step particularly represents the key features of the proposed method. Thus, we will focus on this step, that is, updating rules regarding the main iterate, the Lagrange multipliers, and the parameters. In the subsequent arguments, we denote by $x_{k}$ the $k$-th iteration point and use the symbols $p_{k}$, $\widetilde{y}_{k+1}$, and $\widetilde{z}_{k+1}$ defined in~\eqref{newpoints}.

\subsubsection*{Updating rule of main iterate}
In the ordinary SQP-type method for optimization problems in Euclidean spaces, the main iterate is updated by $x_{k+1} \coloneqq x_{k} + \alpha_{k} p_{k}$ after computing the search direction $p_{k} \in T_{x_{k}} \M$ and step size $\alpha_{k} > 0$. However, such a new point $x_{k+1}$ does not make sense for Riemannian optimization. Thus, the new point is generated by the following updating rule: $x_{k+1} \coloneqq R_{x_{k}}(\alpha_{k} p_{k})$, where $R_{x_{k}} \colon T_{x_{k}}\M \to \M$ denotes a restriction to $T_{x_{k}} \M$ of the retraction $R$. This updating rule is also seen in Obara et al.~\cite{Obara:2022} and Yamakawa et al.~\cite{Yamakawa:2022}.

\subsubsection*{Updating rules of Lagrange multipliers and parameters}
One of the distinctive features of the proposed method is shown in Algorithm~\ref{alg:sub}, and it generates two types of Lagrange multiplier sequences: the main Lagrange multiplier sequence $\{(y_k,z_k)\}$ and the auxiliary Lagrange multiplier sequence $\{(\widehat{y}_k,\widehat{z}_k)\}$. The auxiliary Lagrange multiplier sequence is generated to be bounded, and these two sequences make the algorithm more stable. Since the proposed method aims to obtain a KKT point or an AKKT point, we need to evaluate whether the generated sequence is approaching such a point or not. To this end, we consider the following three cases. 
\begin{description}
\item[{\rm Case 1:}] $x_{k+1}$ is closer to a KKT point compared to $x_k$.
\item[{\rm Case 2:}] $x_{k+1}$ is closer to a stationary point of $F$ compared to $x_k$.
\item[{\rm Case 3:}] otherwise.
\end{description}
Steps~4.1 and 4.2 in Algorithm~\ref{alg:sub} determine whether Case~1 holds or not. Therefore, if it is true, we set $(y_{k+1},z_{k+1}) \coloneqq (\widetilde{y}_{k+1}, \widetilde{z}_{k+1})$ and $({\widehat{y}}_{k+1},{\widehat{z}}_{k+1}) \coloneqq (\widetilde{y}_{k+1}, \widetilde{z}_{k+1})$ because the new point $(x_{k+1}, \widetilde{y}_{k+1}, \widetilde{z}_{k+1})$ is approaching a KKT point. Step 4.3 investigates whether Case~2 holds or not. If it is true, Algorithm~\ref{alg:sub} updates the Lagrange multipliers by updating rules based on augmented Lagrangian methods because this step corresponds to the minimization of the augmented Lagrangian $F$. In the case of Step~4.4, that is, Case~3, the Lagrange multipliers are updated with no changes. If this step occurs repeatedly, the multiplier estimates and the parameters remain unchanged, which is undesirable. However, the algorithm cannot remain in Step~4.4 at every sufficiently large iteration under appropriate assumptions.

\subsubsection*{Proposed RSSQP method}
In the remainder of this section, we provide a formal statement of the proposed RSSQP method, that is, Algorithm~\ref{alg:main}, and its convergence analysis. The main goal of the analysis is to establish global convergence of Algorithm~\ref{alg:main} under some assumptions.

\begin{algorithm}[htbp] \caption{(Proposed RSSQP method)} \label{alg:main}
\begin{algorithmic}[1]
\Require Choose constants $\beta \in (0,1)$, $\varepsilon \in (0,1)$, and $\theta \in (0,1)$. Set nonempty compact convex sets $C \subset \R^{m}$ and $D \subset \R_{+}^{\ell}$. Initialize $(x_0,y_0,z_0) \in \M \times \R^{m} \times \R^{\ell}$, $\widehat{y}_{0} \coloneqq y_0$, $\widehat{z}_0 \coloneqq z_0$, $\sigma_0 > 0$, $\phi_0 > 0$, $\psi_0 > 0$, $\gamma_0 > 0$, and $k\coloneqq 0$, where $y_0 \in C$ and $z_0 \in D$.

\Repeat
\State Select $H_k$ so that it is self-adjoint, and $\langle ( H_k + \frac{1}{\sigma_k} \rmD{g}(x_k)^{\ast} \rmD{g}(x_k))[ \, \cdot \,], \, \cdot \, \rangle_{x_k}$ is coercive. Set $M_k$, $s_k$, and $t_k$ as follows:
\begin{gather*}
\textstyle M_k \coloneqq H_k + \frac{1}{\sigma_k} \rmD g(x_k)^{\ast} \rmD g(x_k),
~~
\textstyle s_k \coloneqq \widehat{y}_{k} - \frac{1}{\sigma_k} g(x_k), ~~ t_k \coloneqq \widehat{z}_{k} - \frac{1}{\sigma_k} h(x_k).
\end{gather*}

\State Solve problem~\eqref{pro:RNLOparch} and obtain its optimizer $(\xi_{k}^{\ast}, \eta_{k}^{\ast})\in T_{x_{k}} \M \times \R_{+}^{\ell}$. \label{step:subpro}

\State Set $(p_k, \widetilde{y}_{k+1}, \widetilde{z}_{k+1})$ as follows:
\begin{align}
\textstyle p_k \coloneqq \xi_{k}^{\ast}, \quad \widetilde{y}_{k+1} \coloneqq \widehat{y}_k - \frac{1}{\sigma_k} (g(x_k) + \rmD g(x_k)[\xi_{k}^{\ast}]), \quad \widetilde{z}_{k+1} \coloneqq \eta_{k}^{\ast}.
\end{align}

\State Find a step size $\alpha_k = {\beta}^{j_k}$, where $j_k$ is the smallest nonnegative integer satisfying
\begin{align}\label{eq:armijo-rule}
F(R_{x_{k}} (\beta^{j_k} p_k); \widehat{y}_k, \widehat{z}_k, \sigma_{k}) \leq F(x_k; \widehat{y}_k, \widehat{z}_k, \sigma_{k}) + \varepsilon \beta^{j_k} \langle \grad F(x_k; \widehat{y}_k, \widehat{z}_k, \sigma_k), p_k \rangle_{x_{k}}. 
\end{align}

\State Compute $x_{k+1}=R_{x_k}(\alpha_{k} p_k)$.

\State Set $(y_{k+1}, z_{k+1}, \widehat{y}_{k+1}, \widehat{z}_{k+1}, \sigma_{k+1}, \phi_{k+1}, \psi_{k+1}, \gamma_{k+1})$ by Algorithm~\ref{alg:sub}. \label{update:Algo2}

\State Update $k \leftarrow k + 1$.

\Until $(x_k,y_k,z_k)$ satisfies a stopping criterion.
\end{algorithmic}
\end{algorithm}

\begin{algorithm}[t] \caption{(Updating rules of Lagrange multipliers and parameters)}
\label{alg:sub}
\begin{algorithmic}[1]
\Require Select a constant $\theta \in (0, 1)$ and nonempty compact convex sets $C \subset \R^{m}$ and $D \subset \R_{+}^{\ell}$. Input $x_{k+1}, \widetilde{y}_{k+1}, \widetilde{z}_{k+1}, y_k, z_k, \widehat{y}_{k}, \widehat{z}_{k}, \sigma_{k}, \phi_{k}, \psi_{k}$, and $\gamma_{k}$.

\If{$\Phi_{\theta} (x_{k+1}, \widetilde{y}_{k+1}, \widetilde{z}_{k+1}) \leq \frac{1}{2} \phi_k$, $\widetilde{y}_{k+1} \in C$, and $\widetilde{z}_{k+1} \in D$,} \Comment{Step~4.1}
\begin{gather*}
y_{k+1} \coloneqq \widetilde{y}_{k+1}, \quad z_{k+1} \coloneqq \widetilde{z}_{k+1}, \quad \widehat{y}_{k+1} \coloneqq \widetilde{y}_{k+1}, \quad \widehat{z}_{k+1} \coloneqq \widetilde{z}_{k+1}, 
\\
\textstyle \sigma_{k+1} \coloneqq \sigma_{k}, \quad \phi_{k+1} \coloneqq \frac{1}{2}\phi_{k}, \quad \psi_{k+1} \coloneqq \psi_{k}, \quad \gamma_{k+1} \coloneqq \gamma_{k}.
\end{gather*}

\ElsIf{$\Psi_{\theta} (x_{k+1}, \widetilde{y}_{k+1}, \widetilde{z}_{k+1}) \leq \frac{1}{2} \psi_{k}$, $\widetilde{y}_{k+1} \in C$, and $\widetilde{z}_{k+1} \in D$,} \Comment{Step~4.2}
\begin{gather*}
y_{k+1} \coloneqq \widetilde{y}_{k+1}, \quad z_{k+1} \coloneqq \widetilde{z}_{k+1}, \quad \widehat{y}_{k+1} \coloneqq \widetilde{y}_{k+1}, \quad \widehat{z}_{k+1} \coloneqq \widetilde{z}_{k+1}, 
\\
\textstyle \sigma_{k+1} \coloneqq \sigma_{k}, \quad \phi_{k+1} \coloneqq \phi_{k}, \quad \psi_{k+1} \coloneqq \frac{1}{2}\psi_{k}, \quad \gamma_{k+1} \coloneqq \gamma_{k}.
\end{gather*}

\ElsIf{$\Vert \grad F(x_{k+1}; \widehat{y}_{k}, \widehat{z}_{k}, \sigma_{k}) \Vert_{x_{k+1}} \leq \gamma_k$,} \Comment{Step~4.3}
\begin{gather*}
\textstyle y_{k+1} \coloneqq \widehat{y}_{k} - \frac{1}{\sigma_{k}} g(x_{k+1}), \quad z_{k+1} \coloneqq [ \widehat{z}_{k} - \frac{1}{\sigma_{k}} h(x_{k+1}) ]_{+},
\\
\textstyle \widehat{y}_{k+1} \coloneqq P_{C} ( \widehat{y}_{k} - \frac{1}{\sigma_{k}} g(x_{k+1}) ), \quad \widehat{z}_{k+1} \coloneqq P_{D} ( \widehat{z}_{k} - \frac{1}{\sigma_{k}} h(x_{k+1})), 
\\
\textstyle \sigma_{k+1} \coloneqq \min \{ \frac{1}{2} \sigma_{k}, r(x_{k},y_{k},z_{k})^{\frac{3}{2}} \}, \quad \phi_{k+1} \coloneqq \phi_{k}, \quad \psi_{k+1} \coloneqq \psi_{k}, \quad \gamma_{k+1} \coloneqq \frac{1}{2}\gamma_{k}.
\end{gather*}

\Else \Comment{Step~4.4}
\begin{gather*}
y_{k+1} \coloneqq {y}_{k}, \quad z_{k+1} \coloneqq {z}_{k}, \quad \widehat{y}_{k+1} \coloneqq \widehat{y}_{k}, \quad \widehat{z}_{k+1} \coloneqq \widehat{z}_{k}, 
\\
\sigma_{k+1} \coloneqq \sigma_{k}, \quad \phi_{k+1} \coloneqq \phi_{k}, \quad \psi_{k+1} \coloneqq \psi_{k}, \quad \gamma_{k+1} \coloneqq \gamma_{k}.
\end{gather*}
  
\EndIf

\Ensure{$(y_{k+1}, z_{k+1}, \widehat{y}_{k+1}, \widehat{z}_{k+1}, \sigma_{k+1}, {\phi}_{k+1}, \psi_{k+1}, \gamma_{k+1})$.}

\end{algorithmic}
\end{algorithm}

\noindent
From now on, we show the well-definedness of Algorithm~\ref{alg:main}. To this end, the solvability of problem~\eqref{pro:RNLOparch} is verified in the next proposition. The proof is given in Appendix~\ref{sec:appendix}.
\begin{proposition}\label{prop:solution}
Suppose that $M_k$ is self-adjoint and coercive. Then, problem~\eqref{pro:RNLOparch} has a unique optimal solution~$(\xi_{k}^{\ast},\eta_{k}^{\ast}) \in T_{x_{k}} \M \times \R^{\ell}$, and it satisfies the KKT conditions of~\eqref{pro:RNLOparch}. 
\end{proposition}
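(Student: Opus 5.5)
Since $T_{x_k}\M$ is a $d$-dimensional inner product space, we identify ${\cal W}_k = T_{x_k}\M\times\R^{\ell}$ with the finite-dimensional Hilbert space carrying the product inner product $\langle (\xi,\eta),(\xi',\eta')\rangle \coloneqq \langle \xi,\xi'\rangle_{x_k} + \eta^{\top}\eta'$. In this setting \eqref{pro:RNLOparch} is a convex quadratic program with finitely many linear inequality constraints. The plan has three parts: show it is feasible, show the objective is strongly convex (this gives existence and uniqueness), and show the constraints are linear (so the KKT conditions hold at the minimizer with no further qualification).

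\textbf{Feasibility and closedness.} The pair $(\xi,\eta) = (0, t_k)$ satisfies $\rmD h(x_k)[0] + \sigma_k(t_k - t_k) = 0 \geq 0$, so the feasible set $\Omega_k$ is nonempty. Because $\rmD h(x_k)$ is linear, $\Omega_k$ is an intersection of finitely many closed half-spaces, hence a closed convex polyhedron.

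\textbf{Strong convexity and existence/uniqueness.} Write the objective as
\begin{align*}
q_k(\xi,\eta) = \langle c_k, \xi\rangle_{x_k} + \tfrac12\langle M_k[\xi],\xi\rangle_{x_k} + \tfrac{\sigma_k}{2}\|\eta\|^2, \qquad c_k \coloneqq \grad f(x_k) - \rmD g(x_k)^{\ast}[s_k].
\end{align*}
Coercivity of $M_k$ gives some $\mu>0$ with $\langle M_k[\xi],\xi\rangle_{x_k} \geq \mu\|\xi\|_{x_k}^2$. Together with $\sigma_k>0$ and the self-adjointness of $M_k$, the quadratic part is bounded below by $\frac{\min(\mu,\sigma_k)}{2}\|(\xi,\eta)\|^2$. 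Hence $q_k$ is strongly convex and continuous, and therefore $q_k(\xi,\eta) \to \infty$ as $\|(\xi,\eta)\|\to\infty$. Its sublevel set $\{q_k \le q_k(0,t_k)\}\cap\Omega_k$ is then closed, bounded and nonempty in finite dimensions, hence compact, and Weierstrass gives a minimizer $(\xi_k^\ast,\eta_k^\ast)$. Uniqueness follows from strict convexity: if there were two distinct minimizers, their midpoint would be feasible by convexity of $\Omega_k$ and would have strictly smaller objective value, a contradiction.

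\textbf{KKT conditions.} All constraints are affine in $(\xi,\eta)$, so the linearity (Abadie) constraint qualification holds automatically and KKT multipliers exist at the minimizer. To keep the argument in the inner product space $T_{x_k}\M$ without coordinates, I would apply Farkas' lemma to the active constraints at $(\xi_k^\ast,\eta_k^\ast)$, using the adjoint $\rmD h(x_k)^{\ast}$. Alternatively, fix an orthonormal basis of $T_{x_k}\M$ to reduce to a Euclidean QP and use the standard result for linearly constrained problems.

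This yields $\lambda\in\R^{\ell}_+$ with
\begin{gather*}
c_k + M_k[\xi_k^\ast] - \rmD h(x_k)^{\ast}[\lambda] = 0, \qquad \sigma_k\eta_k^\ast - \sigma_k\lambda = 0,\\
\rmD h(x_k)[\xi_k^\ast] + \sigma_k(\eta_k^\ast - t_k)\ge 0, \qquad \lambda^{\top}\bigl(\rmD h(x_k)[\xi_k^\ast] + \sigma_k(\eta_k^\ast - t_k)\bigr)=0.
\end{gather*}
The second equation gives $\lambda = \eta_k^\ast$, so $\eta_k^\ast \in \R^{\ell}_+$, consistent with the algorithm's statement.

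The only step needing any care is the existence of multipliers without an explicit CQ. This is resolved by the polyhedral nature of the constraints, so I expect no real obstacle.
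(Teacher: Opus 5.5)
Your proof is correct and follows essentially the same route as the paper. Existence comes from the coercivity of the objective on the nonempty closed feasible set, with $(0,t_k)$ as the feasible witness; uniqueness comes from strong convexity; and the KKT conditions come from a constraint qualification.

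The only real difference is which constraint qualification you use. The paper obtains the RCQ from surjectivity of $(\xi,\eta)\mapsto \rmD h(x_k)[\xi]+\sigma_k\eta$. You instead use the affineness of the constraints, which is equally valid here. It also makes explicit that the multiplier coincides with $\eta_k^{\ast}$, so $\eta_k^{\ast}\geq 0$.
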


Lemma~\ref{lem:grad} shows that $\xi_{k}^{\ast}$, which is calculated by Line~\ref{step:subpro} in Algorithm~\ref{alg:main}, is a descent direction of the merit function $F$ if $\grad F(x_k; \widehat{y}_k, \widehat{z}_k, \sigma_k) \not = 0$. The proof is given in Appendix~\ref{sec:appendix}.
\begin{lemma}\label{lem:grad}
Suppose that $M_k$ is self-adjoint and coercive. Then, problem~\eqref{pro:RNLOparch} has a unique optimal solution~$(\xi_{k}^{\ast},\eta_{k}^{\ast})$, and it satisfies
\begin{gather}
\langle \grad F(x_k; \widehat{y}_k, \widehat{z}_k, \sigma_k), \xi_{k}^{\ast} \rangle_{x_k} \leq - \langle M_k[\xi_{k}^{\ast}], \xi_{k}^{\ast} \rangle_{x_k} - \sigma_k \| \eta_{k}^{\ast} - [t_k]_+ \|^{2},\label{eq:gradient}
\\
\grad F(x_k; \widehat{y}_k, \widehat{z}_k, \sigma_k) = - M_k[\xi_{k}^{\ast}] + \rmD h(x_k)^{\ast}[\eta_{k}^{\ast} - [t_{k}]_{+}]. \label{eq:ineqabs}
\end{gather}
Moreover, $\grad F(x_k; \widehat{y}_k, \widehat{z}_k, \sigma_k)=0$ if and only if $(\xi_{k}^{\ast}, \eta_{k}^{\ast}) = (0, [t_k]_+)$ solves problem~\eqref{pro:RNLOparch}.
\end{lemma}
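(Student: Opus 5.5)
Existence and uniqueness of $(\xi_k^\ast,\eta_k^\ast)$ follow from Proposition~\ref{prop:solution}, which also gives the KKT conditions of~\eqref{pro:RNLOparch}. With multiplier $\lambda\in\R^{\ell}_{+}$ for the inequality $\rmD h(x_k)[\xi]+\sigma_k(\eta-t_k)\ge 0$, these conditions read
\begin{gather*}
\grad f(x_k)-\rmD g(x_k)^{\ast}[s_k]+M_k[\xi_k^\ast]-\rmD h(x_k)^{\ast}[\lambda]=0,\qquad \sigma_k\eta_k^\ast-\sigma_k\lambda=0,\\
\lambda\ge 0,\quad \rmD h(x_k)[\xi_k^\ast]+\sigma_k(\eta_k^\ast-t_k)\ge 0,\quad \langle \lambda,\rmD h(x_k)[\xi_k^\ast]+\sigma_k(\eta_k^\ast-t_k)\rangle=0 .
\end{gather*}
The second equation gives $\lambda=\eta_k^\ast\ge0$. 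Comparing the first equation with~\eqref{eq:graddef} evaluated at $(\widehat y_k,\widehat z_k,\sigma_k)$, we have $\widehat y_k-\frac1{\sigma_k}g(x_k)=s_k$ and $[\widehat z_k-\frac1{\sigma_k}h(x_k)]_+=[t_k]_+$. Hence
\[
\grad F(x_k;\widehat y_k,\widehat z_k,\sigma_k)=\grad f(x_k)-\rmD g(x_k)^{\ast}[s_k]-\rmD h(x_k)^{\ast}[[t_k]_+]=-M_k[\xi_k^\ast]+\rmD h(x_k)^{\ast}[\eta_k^\ast-[t_k]_+],
\]
which is~\eqref{eq:ineqabs}.

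For~\eqref{eq:gradient}, take the inner product of~\eqref{eq:ineqabs} with $\xi_k^\ast$:
\[
\langle \grad F,\xi_k^\ast\rangle_{x_k}=-\langle M_k[\xi_k^\ast],\xi_k^\ast\rangle_{x_k}+\langle \eta_k^\ast-[t_k]_+,\rmD h(x_k)[\xi_k^\ast]\rangle .
\]
It remains to bound the last term by $-\sigma_k\|\eta_k^\ast-[t_k]_+\|^2$. Set $w\coloneqq\rmD h(x_k)[\xi_k^\ast]+\sigma_k(\eta_k^\ast-t_k)\ge0$; complementarity gives $\langle\eta_k^\ast,w\rangle=0$. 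Then
\[
\langle \eta_k^\ast-[t_k]_+,\rmD h(x_k)[\xi_k^\ast]\rangle=\langle \eta_k^\ast-[t_k]_+,w\rangle-\sigma_k\langle\eta_k^\ast-[t_k]_+,\eta_k^\ast-t_k\rangle .
\]
Since $\langle\eta_k^\ast,w\rangle=0$, $[t_k]_+\ge0$ and $w\ge0$, the first term equals $-\langle [t_k]_+,w\rangle\le 0$. For the second term, split $\eta_k^\ast-t_k=(\eta_k^\ast-[t_k]_+)+([t_k]_+-t_k)$, where $[t_k]_+-t_k=[-t_k]_+\ge0$. Then $\langle\eta_k^\ast-[t_k]_+,[-t_k]_+\rangle=\langle\eta_k^\ast,[-t_k]_+\rangle\ge0$, because $[t_k]_+$ and $[-t_k]_+$ have disjoint supports and $\eta_k^\ast\ge0$. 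Consequently the second term is at most $-\sigma_k\|\eta_k^\ast-[t_k]_+\|^2$, which proves~\eqref{eq:gradient}. This componentwise sign bookkeeping is the only delicate step; everything else is substitution.

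For the equivalence, suppose first that $\grad F=0$. Then~\eqref{eq:gradient} gives $0\le-\langle M_k\xi_k^\ast,\xi_k^\ast\rangle-\sigma_k\|\eta_k^\ast-[t_k]_+\|^2$. By coercivity of $M_k$ and $\sigma_k>0$, this forces $\xi_k^\ast=0$ and $\eta_k^\ast=[t_k]_+$, and by uniqueness $(0,[t_k]_+)$ is the solution. Conversely, if $(0,[t_k]_+)$ solves~\eqref{pro:RNLOparch}, substitute it into~\eqref{eq:ineqabs} to get $\grad F=-M_k[0]+\rmD h(x_k)^{\ast}[0]=0$.
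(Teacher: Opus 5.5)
Your proof is correct and follows essentially the same route as the paper. Both arguments rest on the KKT system of \eqref{pro:RNLOparch} with multiplier $\eta_k^\ast$, on complementarity together with feasibility tested against $[t_k]_+\ge 0$, and on the projection inequality $\langle \eta_k^\ast-[t_k]_+,\, t_k-[t_k]_+\rangle\le 0$ for $\eta_k^\ast\ge 0$, which your disjoint-support argument verifies componentwise; the equivalence then comes from coercivity and substitution into \eqref{eq:ineqabs}, and the only difference is that you derive \eqref{eq:ineqabs} first and pair it with $\xi_k^\ast$, whereas the paper computes $\langle \grad F,\xi_k^\ast\rangle_{x_k}$ directly from the KKT conditions and obtains \eqref{eq:ineqabs} afterwards.
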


Lemma~\ref{lem:grad} implies that the smallest nonnegative integer $j_k$, determined in the Armijo's line search, can be found within a finite number of iterations. Combining~this fact and Proposition~\ref{prop:solution}, we can verify that Algorithm~\ref{alg:main} is well-defined.
\par
Next, to establish the global convergence property of Algorithm~\ref{alg:main}, we impose the following assumptions.
\begin{assumption}\label{ass:grad}
~
\begin{description}
\item[{\rm (A1)}] The functions $f$, $g$, and $h$ are twice continuously differentiable on $\M$.

\item[{\rm (A2)}] There exists $\nu>0$ such that
\begin{gather}
\textstyle \frac{1}{\nu} \| u \|_{x_k}^2 \leq \langle M_{k}[u], u \rangle_{x_k},
~~ |\langle H_k[u], v \rangle_{x_k}| \leq \nu \| u \|_{x_k} \| v \|_{x_k}
\end{gather}
for all $u \in T_{x_k} \M$, $v \in T_{x_k} \M$, and $k \in \mathbb{N} \cup \{ 0 \}$.
\end{description}
\end{assumption}

\noindent
Under Assumption~\ref{ass:grad}, Proposition~\ref{prop:solution} implies that problem~\eqref{pro:RNLOparch} is solvable, and Lemma~\ref{lem:grad} ensures that Armijo's line-search is well-defined. Therefore, Assumption~\ref{ass:grad} guarantees the well-definedness of Algorithm~\ref{alg:main}. In the following, we suppose that Algorithm~\ref{alg:main} generates an infinite number of iterations.

\par
In the convergence analysis of Algorithm~\ref{alg:main}, we use three mutually disjoint sets ${\cal{I}}, {\cal{J}}$, and ${\cal{K}}$ defined as follows:
\begin{align*}
\mathcal{I} &\coloneqq \{ k \in \mathbb{N} \cup \{ 0 \} ; \, \text{Line~\ref{update:Algo2} of Algorithm~\ref{alg:main} is updated by Step~4.1 or 4.2.} \},
\\
\mathcal{J} &\coloneqq \{ k \in \mathbb{N} \cup \{ 0 \} ; \, \text{Line~\ref{update:Algo2} of Algorithm~\ref{alg:main} is updated by Step~4.3.} \},
\\
\mathcal{K} &\coloneqq \{ k \in \mathbb{N} \cup \{ 0 \} ; \, \text{Line~\ref{update:Algo2} of Algorithm~\ref{alg:main} is updated by Step~4.4.} \}.
\end{align*}
Moreover, we prepare some lemmas for the convergence analysis. 

\begin{lemma}\label{lem:cardIJ}
The algorithm satisfies the following assertions:
\begin{description}
\item[{\rm (i)}] If $\mathrm{card}(\cal{I})=\infty$, then $\phi_k \to 0$ or $\psi_k \to 0$ as $k \to \infty$.
\item[{\rm (ii)}] If $\mathrm{card}(\cal{J})=\infty$, then $\gamma_k \to 0$ and $\sigma_k \to 0$ as $k \to \infty$.
\item[{\rm (iii)}] The sequences $\{ \widehat{y}_k \}$ and $\{ \widehat{z}_k \}$ are included in compact sets $C$ and $D$, respectively.
\end{description}
\end{lemma}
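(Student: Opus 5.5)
All three assertions follow by bookkeeping on the update rules of Algorithm~\ref{alg:sub}. The plan is to track how each parameter changes in each of Steps~4.1--4.4, using three facts: the sequences $\{\phi_k\}$, $\{\psi_k\}$, $\{\gamma_k\}$, $\{\sigma_k\}$ are positive and nonincreasing; each one is halved only in specific steps; and the auxiliary multipliers change only through assignments that land in $C$ and $D$.

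For (i), I would partition $\mathcal{I}$ into $\mathcal{I}_1$ (iterations updated by Step~4.1) and $\mathcal{I}_2$ (Step~4.2). Since $\mathrm{card}(\mathcal{I})=\infty$, at least one of these is infinite. Every step of Algorithm~\ref{alg:sub} either keeps $\phi_k$ or halves it, so $\{\phi_k\}$ is positive and nonincreasing. If $\mathrm{card}(\mathcal{I}_1)=\infty$, then $\phi_k$ is halved infinitely often, and monotonicity gives $0<\phi_k\le \phi_0 2^{-N(k)}$, where $N(k)\to\infty$ counts the number of Step~4.1 iterations before $k$. Hence $\phi_k\to 0$. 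The same argument applies to $\psi_k$ when $\mathrm{card}(\mathcal{I}_2)=\infty$.

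For (ii), $\gamma_k$ is halved exactly in Step~4.3 and unchanged otherwise, so the argument of (i) gives $\gamma_k\to0$. For $\sigma_k$, Steps~4.1, 4.2 and 4.4 leave it unchanged, while Step~4.3 sets $\sigma_{k+1}=\min\{\frac12\sigma_k, r(x_k,y_k,z_k)^{3/2}\}\le\frac12\sigma_k$. So $\{\sigma_k\}$ is nonincreasing and is at least halved at each of infinitely many iterations, which gives $\sigma_k\le \sigma_0 2^{-|\{j\in\mathcal{J};j<k\}|}\to0$. One subtle point is that $\sigma_k$ must stay positive so that the algorithm remains well-defined. This requires $r(x_k,y_k,z_k)>0$, which holds unless a KKT point has been reached, in which case the stopping criterion would terminate the algorithm. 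Under the standing assumption of infinitely many iterations I will note this but not dwell on it, since (ii) only needs the upper bound.

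For (iii), I will argue by induction on $k$. The base case is $\widehat y_0=y_0\in C$ and $\widehat z_0=z_0\in D$, by the initialization. In Steps~4.1 and 4.2, $\widehat y_{k+1}=\widetilde y_{k+1}$ and $\widehat z_{k+1}=\widetilde z_{k+1}$, and the step's own conditions require $\widetilde y_{k+1}\in C$ and $\widetilde z_{k+1}\in D$. In Step~4.3 the new values are the metric projections $P_C(\cdot)\in C$ and $P_D(\cdot)\in D$, which are well-defined because $C$ and $D$ are nonempty, closed and convex. In Step~4.4 the values are unchanged. Hence $\{\widehat y_k\}\subset C$ and $\{\widehat z_k\}\subset D$, and these sets are compact by assumption.

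There is no substantive obstacle here. The only step needing care is the $\sigma_k$ part of (ii): one must use monotonicity of $\sigma_k$ over all steps, not just over $\mathcal{J}$, together with the bound $\min\{\cdot\}\le\frac12\sigma_k$.
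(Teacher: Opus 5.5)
Your proposal is correct and follows the paper's proof essentially step for step. For (i) it splits $\mathcal{I}$ into Step~4.1 and Step~4.2 iterations; for (ii) it uses $\gamma_{k+1}=\frac{1}{2}\gamma_k$ and $\sigma_{k+1}\le\frac{1}{2}\sigma_k$ on $\mathcal{J}$ with no change off $\mathcal{J}$; and for (iii) it runs the same induction over the four update cases. Your side remark that keeping $\sigma_{k+1}>0$ needs $r(x_k,y_k,z_k)>0$ is a well-definedness point the paper does not raise, but as you note it is not needed for the convergence claim in (ii).
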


\begin{proof}
First, we prove item~(i). If $\mathrm{card}(\cal{I})=\infty$, the number of iterations updated by either Step 4.1 or 4.2 becomes infinite. If the number of iterations updated by Step 4.1 is infinite, then $\phi_k \to 0$ as $k \to \infty$ because the updating rule implies that $\phi_{k+1} \coloneqq \frac{1}{2}\phi_{k}$ for any $k\in\{k \in \mathbb{N}; \text{Step 4.1 is used}\}$ and $\phi_{k+1} \coloneqq \phi_{k}$ for any $k\in\{k \in \mathbb{N}; \text{Step 4.1 is not used}\}$. If the number of iterations updated by Step 4.2 is infinite, then $\psi_k \to 0$ as $k \to \infty$ because the updating rule ensures that $\psi_{k+1} \coloneqq \frac{1}{2}\psi_{k}$ for any $k\in\{k \in \mathbb{N}; \text{Step 4.2 is used}\}$ and $\psi_{k+1} \coloneqq \psi_{k}$ for any $k\in\{k \in \mathbb{N}; \text{Step 4.2 is not used}\}$. Therefore, item~(i) is satisfied.
\par
Second, we prove item~(ii). For any $k\in{\cal{J}}$, the acceptance test of Step~4.3 is true, that is, $\| \grad F(x_{k+1};\widehat{y}_k,\widehat{z}_k,\sigma_k) \|_{x_{k+1}} \leq \gamma_k$. Therefore, the updating rule implies that $\gamma_{k+1} \coloneqq \frac{1}{2}\gamma_{k} $ and $\sigma_{k+1} \leq \frac{1}{2}\sigma_{k}$ for any $k \in {\cal J}$ and $\gamma_{k+1} \coloneqq \gamma_{k}$ and $\sigma_{k+1} \coloneqq \sigma_{k}$ for any $k \notin {\cal J}$. Thus, if ${\rm card}({\cal J}) = \infty$, then $\gamma_k \to 0$ and $\sigma_k \to 0$ as $k \to \infty$. 
\par
Finally, we prove item~(iii) using mathematical induction. For $k=0$, we can verify that $\widehat{y}_0 = y_0 \in C$ and $\widehat{z}_0 = z_0 \in D$. Next, we assume that $k \in {\mathbb{N}}\cup \{0\}$ satisfies $\widehat{y}_k \in C$ and $\widehat{z}_k \in D$. If $k \in {\cal I}$, it is clear that $\widehat{y}_{k+1} \in C$ and $\widehat{z}_{k+1} \in D$ because $\widehat{y}_{k+1} \coloneqq \widetilde{y}_{k+1} \in C$ and $\widehat{z}_{k+1} \coloneqq \widetilde{z}_{k+1} \in D$ by the updating rule of Steps~4.1 and 4.2. If $k \in {\cal J}$, we readily have $\widehat{y}_{k+1} \in C$ and $\widehat{z}_{k+1} \in D$ because $\widehat{y}_{k+1} \coloneqq P_C(\widehat{y}_k - \frac{1}{\sigma_k} g(x_{k+1})) \in C$ and $\widehat{z}_{k+1} \coloneqq P_D(\widehat{z}_k - \frac{1}{\sigma_k} h(x_{k+1})) \in D$ by the updating rule of Step~4.3. If $k \in {\cal K}$, then $\widehat{y}_{k+1} \in C$ and $\widehat{z}_{k+1} \in D$ hold because $\widehat{y}_{k+1} \coloneqq \widehat{y}_{k} \in C$ and $\widehat{z}_{k+1} \coloneqq \widehat{z}_{k} \in D$ by the updating rule of Step~4.4. As a result, we obtain $\widehat{y}_{k+1} \in C$ and $\widehat{z}_{k+1} \in D$. Thus, item~(iii) is satisfied by mathematical induction.
\end{proof}

\begin{lemma}\label{lem:Step_F}
Suppose that Assumption~{\rm \ref{ass:grad}} is satisfied and that $\{x_k\}$ is bounded. If $\mathrm{card}(\cal{I})<\infty$, $\mathrm{card}(\cal{J})<\infty$, and $\mathrm{card}(\cal{K})=\infty$, then the following statements are satisfied.
\begin{description}
\item[{\rm (i)}] There exist $\widehat{k}\in\mathbb{N}$, $\widehat{y} \in \R^{m}$, $\widehat{z} \in \R^{\ell}$, $\widehat{\sigma} > 0$, and $\hat{\gamma} > 0$ such that $k\in{\cal{K}}$, $\widehat{y}_k = \widehat{y}$, $\widehat{z}_k = \widehat{z}$, $\sigma_k = \widehat{\sigma}$, and $\gamma_k = \widehat{\gamma}$ for all $k \geq \widehat{k}$.

\item[{\rm (ii)}] The sequence $\{ (x_k, p_k) \} \subset T\M$ has a convergent subsequence.

\item[{\rm (iii)}] $\liminf_{k \to \infty} | \langle \grad F(x_k; \widehat{y}_k, \widehat{z}_k, \sigma_k), p_k \rangle_{x_k} | > 0$.
\end{description}
\end{lemma}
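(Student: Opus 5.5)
Item (i) is bookkeeping. Since $\mathcal{I}$ and $\mathcal{J}$ are finite, there is $\widehat{k}$ such that every $k \geq \widehat{k}$ lies in $\mathcal{K}$. Step~4.4 leaves $\widehat{y}_k$, $\widehat{z}_k$, $\sigma_k$, $\phi_k$, $\psi_k$ and $\gamma_k$ unchanged. Setting $\widehat{y} \coloneqq \widehat{y}_{\widehat{k}}$, $\widehat{z} \coloneqq \widehat{z}_{\widehat{k}}$, $\widehat{\sigma} \coloneqq \sigma_{\widehat{k}}$ and $\widehat{\gamma} \coloneqq \gamma_{\widehat{k}}$ therefore gives (i). Positivity holds because every update keeps $\sigma_k$ and $\gamma_k$ positive. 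For $\sigma_{k+1} = \min\{\frac{1}{2}\sigma_k, r^{3/2}\}$ this needs $r(x_k,y_k,z_k) > 0$, which holds as long as no exact KKT point is reached; otherwise the stopping criterion applies.

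For (ii), the first step is to show that $\{p_k\}$ is bounded. I use \eqref{eq:gradient} together with (A2) and Cauchy--Schwarz:
\[
\frac{1}{\nu}\|p_k\|_{x_k}^2 \le \langle M_k[p_k],p_k\rangle_{x_k} \le -\langle \grad F(x_k;\widehat{y}_k,\widehat{z}_k,\sigma_k), p_k\rangle_{x_k} \le \|\grad F(x_k;\widehat y,\widehat z,\widehat\sigma)\|_{x_k}\|p_k\|_{x_k}.
\]
This yields $\|p_k\|_{x_k} \le \nu \|\grad F(x_k;\widehat y,\widehat z,\widehat\sigma)\|_{x_k}$ for $k \geq \widehat{k}$. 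Since $\{x_k\}$ is bounded and $\M$ is complete, Hopf--Rinow gives that $\{x_k\}$ lies in a compact set. The map $x \mapsto \grad F(x;\widehat y,\widehat z,\widehat\sigma)$ is continuous by (A1) and \eqref{eq:graddef}, so $\|p_k\|_{x_k}$ is bounded. A bounded set in $T\M$ over a compact base is relatively compact, which gives a convergent subsequence of $(x_k,p_k)$.

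For (iii) I argue by contradiction. Suppose some subsequence satisfies $\langle \grad F(x_k;\widehat y,\widehat z,\widehat\sigma),p_k\rangle_{x_k} \to 0$. By \eqref{eq:gradient} and (A2), both $\|p_k\|_{x_k} \to 0$ and $\widehat\sigma\|\eta_k^\ast - [t_k]_+\|^2 \to 0$ along it. Then \eqref{eq:ineqabs} gives $\grad F(x_k;\widehat y,\widehat z,\widehat\sigma) = -M_k[p_k] + \rmD h(x_k)^\ast[\eta_k^\ast - [t_k]_+]$. I must bound $M_k$ on this subsequence. Here $\|H_k\| \le \nu$ by (A2), $\sigma_k = \widehat{\sigma}$ is fixed, and $\rmD g$ and $\rmD h$ are bounded on the compact set, so $\|M_k[p_k]\|_{x_k} \to 0$ and $\grad F(x_k;\widehat y,\widehat z,\widehat\sigma) \to 0$.

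The main obstacle is that this vanishing occurs at $x_k$, whereas the test in Step~4.3 is evaluated at $x_{k+1} = R_{x_k}(\alpha_k p_k)$. Since $\alpha_k \le 1$ and $p_k \to 0$, continuity of the retraction (with $R_x(0_x) = x$) and of $\grad F$ along a further convergent subsequence $x_k \to \bar x$ give $x_{k+1} \to \bar x$. Hence $\|\grad F(x_{k+1};\widehat y,\widehat z,\widehat\sigma)\|_{x_{k+1}} \to \|\grad F(\bar x;\widehat y,\widehat z,\widehat\sigma)\|_{\bar x} = 0$. Therefore the Step~4.3 condition $\|\grad F(x_{k+1};\widehat y_k,\widehat z_k,\sigma_k)\|_{x_{k+1}} \le \widehat{\gamma}$ holds for large $k$ in the subsequence. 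That would force such $k$ into $\mathcal{I}\cup\mathcal{J}$, since Step~4.4 is reached only when Steps~4.1--4.3 all fail. This contradicts $k \in \mathcal{K}$ for all $k \geq \widehat{k}$, so the liminf is positive. Bounding $\rmD g$, $\rmD h$ and the gradient uniformly over varying tangent spaces uses the same compactness/continuity argument on $T\M$ as in (ii).
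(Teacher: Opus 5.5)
Your proposal is correct and follows the paper's proof essentially step by step. For (i) you use the constancy of the parameters under Step~4.4. For (ii) you use the bound $\|p_k\|_{x_k}\le \nu\,\|\grad F(x_k;\widehat y,\widehat z,\widehat\sigma)\|_{x_k}$ together with compactness of closed balls. For (iii) you argue by contradiction: $p_k\to 0$ and $\eta_k^\ast-[t_k]_+\to 0$, then \eqref{eq:ineqabs}, then transfer from $x_k$ to $x_{k+1}=R_{x_k}(\alpha_k p_k)$ so that the Step~4.3 test fires.

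The differences are minor. The paper proves the relative compactness of $\{(x_k,p_k)\}$ in $T\M$ explicitly, via a chart and the metric matrix $G(x)$; you quote this as a standard fact. You also add a remark on why $\sigma_k>0$, which the paper leaves implicit.
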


\begin{proof}
Since $\mathcal I$ and $\mathcal J$ are finite, whereas $\mathcal K$ is infinite, there exists $\widehat{k} \in \mathbb N$ such that $k \in \mathcal K$ for all $k \geq \widehat{k}$. Consequently, Step~4.4 is performed at every iteration \(k\geq\widehat k\). The corresponding update rules imply that there exist $\widehat y \in \R^m$, $\widehat z\in\R^\ell$, $\widehat\sigma>0$, and $\widehat\gamma>0$ such that $\widehat y_k=\widehat y$, $\widehat z_k=\widehat z$, $\sigma_k=\widehat\sigma$, and $\gamma_k=\widehat\gamma$ for all $k\geq\widehat k$. This proves~{\rm (i)}.
\par
We next prove~{\rm (ii)}. By the boundedness of $\{ x_k \}$, there exist $v \in \M$ and $\eta > 0$ such that $\{ x_k \} \subset B(v, \eta)$. We have from~\cite[Corollary~2.105]{Gallot:2004} that $B(v, \eta)$ is compact. Thus, there exists a convergent subsequence $\{ x_k \}_{k \in {\cal P}} \subset \{ x_k \}$ such that $x_{k} \to x^{\ast}$ as ${\cal P} \ni k \to \infty$. 
Recall that $p_{k} = \xi_{k}^{\ast}$ holds by the updating rule of Algorithm~\ref{alg:main}. From Lemma~\ref{lem:grad}, we can use inequality~\eqref{eq:gradient}. It then follows from (A2) of Assumption~\ref{ass:grad} that
\begin{align*}
\frac{1}{\nu} \| p_k \|_{x_k}^2 
&\leq \langle M_k[p_k], p_k \rangle_{x_k} 
\\
&\leq - \langle \grad F(x_k; \widehat y_{k}, \widehat z_{k}, \sigma_{k}), p_k \rangle_{x_k}
\\
&\leq \sup \{ \|\grad F(x; \widehat y_{k}, \widehat z_{k}, \sigma_{k})\|_{x}; x \in B(v, \eta) \} \|p_k\|_{x_k}
\end{align*}
for all $k \in \mathbb{N}$. Using item~(i) yields $\| p_k \|_{x_k} \leq \nu \sup \{ \|\grad F(x;\widehat y,\widehat z,\widehat\sigma)\|_{x}; x \in B(v, \eta) \}$ for all $k \geq \widehat{k}$. By the compactness of $B(v, \eta)$ and the continuity of $x \mapsto \| \grad F(x;\widehat y,\widehat z,\widehat\sigma) \|_x$, we obtain $\sup \{ \|\grad F(x;\widehat y,\widehat z,\widehat\sigma)\|_{x}; x \in B(v, \eta) \} < \infty$. These facts imply that $\{\|p_k\|_{x_k}\}_{k\geq\widehat k}$ is bounded. Since $\{ \| p_{k} \|_{x_k} \}_{k < \widehat{k}}$ is a finite set, we obtain $\sup \{ \| p_k \|_{x_{k}}; k \in \mathbb{N} \} < \infty$. 
Now, let $({\cal U}, \varphi)$ be a $d$-dimensional chart around $x^{\ast}$. Because $\{ x_k \}_{k \in {\cal P}}$ converges to $x^{\ast}$, there exists $\widetilde{k} \in \mathbb{N}$ such that $x_k \in {\cal U}$ for all $k \in {\cal P}$ with $k \geq \widetilde{k}$. Let $\varphi = (\varphi_1, \ldots, \varphi_d)$. Then, for any $k \in {\cal P}$ with $k \geq \widetilde{k}$, there exists $a_k \in \mathbb{R}^{d}$ such that 
\begin{align}
p_k = \sum_{j=1}^{d} [a_k]_j \frac{\partial}{\partial \varphi_j}\Bigg|_{x_k}. \label{equation:pk}
\end{align}
We define $G \colon {\cal U} \to \mathbb{R}^{d \times d}$ as
\begin{align}
[G(x)]_{ij} \coloneqq \Bigg\langle \frac{\partial}{\partial \varphi_i}\Bigg|_{x}, \frac{\partial}{\partial \varphi_j}\Bigg|_{x} \Bigg\rangle_{x} \quad \forall x \in {\cal U}, ~ \forall (i,j) \in \{1, \ldots, d \} \times \{1, \ldots, d \}. \label{def:Gx}
\end{align}
Using~\eqref{equation:pk} and~\eqref{def:Gx} yields
\begin{align}
\| p_k \|_{x_k}^2 = \sum_{i=1}^{d} \sum_{j=1}^{d} [a_k]_i [a_k]_j \Bigg\langle \frac{\partial}{\partial \varphi_i}\Bigg|_{x_k}, \frac{\partial}{\partial \varphi_j}\Bigg|_{x_k} \Bigg\rangle_{x_k} = a_k^{\top} G(x_k) a_k \label{equation:pk_akGkak}
\end{align}
for all $k \in {\cal P}$ with $k \geq \widetilde{k}$. Note that $G \colon {\cal U} \to \mathbb{R}^{d \times d}$ is continuous at $x^{\ast}$, and $G(x^{\ast})$ is positive definite. Hence, for $\mu \coloneqq \frac{1}{2}\lambda_{\min}(G(x^{\ast})) > 0$, there exists $\delta > 0$ such that $\| G(x) - G(x^{\ast}) \|_2 \leq \mu$ for all $x \in B(x^{\ast}, \delta) \subset {\cal U}$, where $\lambda_{\min}(G(x^{\ast}))$ denotes the smallest eigenvalue of $G(x^{\ast})$. Since $x_k \to x^{\ast}~({\cal P} \ni k \to \infty)$, there exists $\overline{k} \geq \widetilde{k}$ such that $x_k \in B(x^{\ast}, \delta) \subset {\cal U}$ for all $k \in {\cal P}$ with $k \geq \overline{k}$. Then, we have
\begin{align}
a_k^{\top} G(x_k) a_k \geq a_k^{\top} G(x^{\ast}) a_k - \| G(x_k) - G(x^{\ast}) \|_2 \| a_k \|^2 \geq  2 \mu \| a_k \|^2 - \mu \| a_k \|^2 = \mu \| a_k \|^2 \label{equation:akGkak_muak}
\end{align}
for any $k \in {\cal P}$ with $k \geq \overline{k}$. Combining~\eqref{equation:pk_akGkak} and~\eqref{equation:akGkak_muak} yields $\| a_k \| \leq \frac{1}{\sqrt{\mu}} \| p_k \|_{x_k}$ for all $k \in {\cal P}$ with $k \geq \overline{k}$. From this fact and $\sup \{ \| p_k \|_{x_{k}}; k \in \mathbb{N} \} < \infty$, we obtain $\sup \{ \| a_k \|; k \in {\cal P}, k \geq \overline{k} \} < \infty$. From this fact, there exist $a^{\ast} \in \mathbb{R}^{d}$ and ${\cal Q} \subset \{ k \in {\cal P}; k \geq \overline{k} \}$ such that $a_{k} \to a^{\ast}$ as ${\cal Q} \ni k \to \infty$. Now, we denote 
\begin{align}
p^{\ast} \coloneqq \sum_{j=1}^{d} [a^{\ast}]_j \frac{\partial}{\partial \varphi_j} \Bigg|_{x^{\ast}}. \label{equation:past}
\end{align}
Meanwhile, we define $\widetilde{\varphi} \colon T{\cal U} \to \varphi({\cal U}) \times \mathbb{R}^{d}$ as
\begin{align}
\widetilde{\varphi} \left( x, \sum_{j=1}^{d} [a]_j \frac{\partial}{\partial \varphi_j} \Bigg|_{x} \right) = (\varphi(x), a). \label{function:varphi}
\end{align}
Note that $\widetilde{\varphi}$ has the inverse mapping $\widetilde{\varphi}^{-1}$. Note also that $\widetilde{\varphi}$ is a homeomorphism with respect to the standard tangent-bundle topology on $T{\cal U}$. Since $(x_{k}, a_{k}) \to (x^{\ast}, a^{\ast})$ as ${\cal Q} \ni k \to \infty$, it follows from~\eqref{equation:pk},~\eqref{equation:past}, and~\eqref{function:varphi} that $\widetilde{\varphi}(x_{k}, p_{k}) = (\varphi(x_{k}), a_{k}) \to (\varphi(x^{\ast}), a^{\ast}) = \widetilde{\varphi}(x^{\ast}, p^{\ast})$ as ${\cal Q} \ni k \to \infty$. This fact and the continuity of $\widetilde{\varphi}^{-1}$ imply $(x_{k}, p_{k}) \to (x^{\ast}, p^{\ast})$ as ${\cal Q} \ni k \to \infty$. This shows item~(ii) because the topology on $T\mathcal{U}$ is the subspace topology inherited from $T\mathcal{M}$.
\par
It remains to prove item~(iii). Suppose, to the contrary, that
\begin{align*}
\liminf_{k \to \infty}
\left|
\langle \grad F(x_k; \widehat y_k, \widehat z_k, \sigma_k), p_k \rangle_{x_k}
\right|=0.
\end{align*}
By item~(i), there exists an infinite index set ${\cal R} \subset \{ k \in \mathbb{N}; k \geq \widehat k\}$ such that
\begin{align}
\lim_{\mathcal R \ni k\to\infty}
\left| \langle\grad F(x_k;\widehat y,\widehat z,\widehat\sigma),p_k\rangle_{x_k} \right|=0.
\label{eq:lemStepF-vanishing-product}
\end{align}
By the compactness argument used in the proof of item~(ii), the sequential compactness of $\{ x_k \}_{k \in {\cal R}}$ has been ensured, that is, there exist $x^{\ast} \in \M$ and ${\cal S} \subset {\cal R}$ such that $x_k \to x^{\ast}$ as ${\cal S} \ni k \to \infty$. Let $k \in {\cal S}$ be arbitrary. 
Using equality~\eqref{eq:ineqabs}, (A2) of Assumption~\ref{ass:grad}, and the definition of $M_k$ gives
\begin{align}
\begin{aligned} \label{eq:lemStepF-grad-zero}
\|\grad F(x_k;\widehat y,\widehat z,\widehat\sigma)\|_{x_k} 
&\leq \left( \nu + \frac{1}{\widehat{\sigma}} \| \rmD g(x_k) \|^2 \right) \| p_k \|_{x_k} + \| \rmD h(x_k) \| \| \eta_k^{\ast} - [t_k]_{+} \|. 
\end{aligned}
\end{align}
Combining~\eqref{eq:gradient} and~\eqref{eq:lemStepF-vanishing-product} yields
\begin{align}
\lim_{{\cal S} \ni k \to \infty} \| p_k \|_{x_k} = 0, \quad \lim_{{\cal S} \ni k \to \infty} \| \eta_k^{\ast} - [t_k]_+ \| = 0. \label{eq:lemStepF-p-eta-zero}
\end{align}
Because $\{ x_k \}_{k \in {\cal S}} \subset B(v, \eta)$ is satisfied, the continuity of $\rmD g$ and $\rmD h$ implies that $\{ \| \rmD g(x_k) \| \}_{k \in {\cal S}}$ and $\{ \| \rmD h(x_k) \| \}_{k \in {\cal S}}$ are bounded. It then follows from~\eqref{eq:lemStepF-grad-zero},~\eqref{eq:lemStepF-p-eta-zero}, and the continuity of $\M \ni x \mapsto \| \grad F(x, \widehat{y}, \widehat{z}, \widehat{\sigma}) \|_{x} \in \R$ that
\begin{align}
\| \grad F(x^{\ast};\widehat y,\widehat z,\widehat\sigma) \|_{x^{\ast}} = \lim_{{\cal S} \ni k \to \infty} \| \grad F(x_k;\widehat y,\widehat z,\widehat\sigma)\|_{x_k} = 0. \label{lim:gradF1}
\end{align}
Now, we recall that $\{ \alpha_k \} \subset (0,1]$ and $x_{k} \to x^{\ast}$ as ${\cal S} \ni k \to \infty$. Hence, the continuity of the retraction and~\eqref{eq:lemStepF-p-eta-zero} imply $x_{k+1} = R_{x_k}(\alpha_k p_k)\to x^{\ast}$ as ${\cal S} \ni k \to \infty$. Therefore, we have
\begin{align}
\lim_{{\cal S} \ni k \to \infty} \|\grad F(x_{k+1};\widehat y,\widehat z,\widehat\sigma)\|_{x_{k+1}} = \| \grad F(x^{\ast};\widehat y,\widehat z,\widehat\sigma) \|_{x^{\ast}}. \label{lim:gradF2}
\end{align}
We have from~\eqref{lim:gradF1} and~\eqref{lim:gradF2} that $\|\grad F(x_{k+1};\widehat y,\widehat z,\widehat\sigma)\|_{x_{k+1}} \to 0$ as ${\cal S} \ni k \to \infty$. Thus, there exists $\widetilde{k} \in {\cal S} \subset {\cal R} \subset \{ k \in \mathbb{N}; k \geq \widehat{k} \}$ such that $\|\grad F(x_{k+1};\widehat y,\widehat z,\widehat\sigma)\|_{x_{k+1}} \leq \widehat{\gamma}$ for all $k \geq \widetilde{k}$ with $k \in {\cal S}$. In other words, the acceptance condition in Step~4.3 is satisfied for all $k \geq \widetilde{k}$ with $k \in {\cal S}$. This contradicts the fact that Step~4.4 is performed for all $k \geq \widehat k$. Therefore, item~(iii) is proven.
\end{proof}

In what follows, we show that the algorithm cannot remain in Step~4.4 at every sufficiently large iteration.

\begin{proposition}\label{lem:notinf}
Suppose that Assumption~{\rm \ref{ass:grad}} is satisfied and that $\{x_k\}$ is bounded. Then, there does not occur a situation such that $\mathrm{card}(\cal{I})<\infty$, $\mathrm{card}(\cal{J})<\infty$, and $\mathrm{card}(\cal{K})=\infty$.
\end{proposition}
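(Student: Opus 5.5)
We argue by contradiction. Suppose $\mathrm{card}(\mathcal I)<\infty$, $\mathrm{card}(\mathcal J)<\infty$ and $\mathrm{card}(\mathcal K)=\infty$. Then Lemma~\ref{lem:Step_F} applies, and for all $k\ge\widehat k$ the merit function $F(\cdot\,;\widehat y,\widehat z,\widehat\sigma)$ is fixed. The Armijo rule~\eqref{eq:armijo-rule} together with~\eqref{eq:gradient} shows that $\{F(x_k;\widehat y,\widehat z,\widehat\sigma)\}_{k\ge\widehat k}$ is nonincreasing. Since $\{x_k\}$ lies in the compact ball $B(v,\eta)$ and $F$ is continuous, this sequence is bounded below and hence converges. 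Summing the Armijo inequalities gives
\[
\sum_{k\ge \widehat k}\alpha_k\,\bigl|\langle \grad F(x_k;\widehat y,\widehat z,\widehat\sigma),p_k\rangle_{x_k}\bigr|<\infty,
\]
so $\alpha_k|\langle\grad F,p_k\rangle|\to0$. By Lemma~\ref{lem:Step_F}(iii) the factor $|\langle\grad F,p_k\rangle|$ is bounded away from zero for large $k$, and therefore $\alpha_k\to0$.

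Next, take the convergent subsequence $(x_k,p_k)\to(x^\ast,p^\ast)$ in $T\M$, $k\in\mathcal Q$, from Lemma~\ref{lem:Step_F}(ii). Since $\alpha_k\to0$, we have $j_k\ge1$ for large $k\in\mathcal Q$, so the trial step $\beta^{-1}\alpha_k$ fails the Armijo test:
\[
F(R_{x_k}(\beta^{-1}\alpha_k p_k))-F(x_k)>\varepsilon\beta^{-1}\alpha_k\langle \grad F(x_k),p_k\rangle_{x_k}.
\]
Set $\tau_k\coloneqq\beta^{-1}\alpha_k\to0$. Consider the pullback $\widehat F\coloneqq F\circ R$ on $T\M$. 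It is $C^1$: the map $x\mapsto[\sigma z-h(x)]_+^2$ is $C^1$ because $t\mapsto (t_+)^2$ is $C^1$, $F$ is therefore $C^1$, and the retraction is smooth. Apply the mean value theorem to $\tau\mapsto F(R_{x_k}(\tau p_k))$ on $[0,\tau_k]$. This yields $\bar\tau_k\in(0,\tau_k)$ with
\[
\langle \grad F(x_k),p_k\rangle\cdot\varepsilon<\mathrm{D}F(R_{x_k}(\bar\tau_k p_k))\bigl[\mathrm{D}R_{x_k}(\bar\tau_k p_k)[p_k]\bigr].
\]
Now pass to the limit along $\mathcal Q$. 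The right-hand side is a continuous function on $T\M\times\R$ (via the tangent-bundle chart used in Lemma~\ref{lem:Step_F}(ii)) evaluated at $(x_k,p_k,\bar\tau_k)\to(x^\ast,p^\ast,0)$. Because $\mathrm{D}R_x(0)=\mathrm{id}$, it tends to $\langle\grad F(x^\ast),p^\ast\rangle_{x^\ast}$. The left-hand side tends to $\varepsilon\langle\grad F(x^\ast),p^\ast\rangle_{x^\ast}$. We obtain $(1-\varepsilon)\langle\grad F(x^\ast),p^\ast\rangle\ge0$.

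On the other hand, each $\langle\grad F(x_k),p_k\rangle\le0$ by~\eqref{eq:gradient}, and by Lemma~\ref{lem:Step_F}(iii) its absolute value stays above some $c>0$. In the limit, $\langle\grad F(x^\ast),p^\ast\rangle\le -c<0$, which contradicts the previous inequality since $\varepsilon<1$.

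The main obstacle is the limiting argument in the second step on a manifold. It requires the joint continuity of $(x,\xi,\tau)\mapsto \mathrm{D}(F\circ R_x)(\tau\xi)[\xi]$ on $T\M\times\R$, and the continuity of $(x,\xi)\mapsto\langle\grad F(x),\xi\rangle_x$ on $T\M$. I would justify both in local coordinates by working in the chart $\widetilde\varphi$ of Lemma~\ref{lem:Step_F}(ii). There $F\circ R$ becomes a $C^1$ function of $(\varphi(x),a)$ near $(\varphi(x^\ast),0)$, and the standard Euclidean Armijo argument goes through verbatim. One point to check is that $\tau_k p_k$ stays in the chart domain, which holds because $\tau_k\to0$ and $\{p_k\}$ is bounded.
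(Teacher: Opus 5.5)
Your proposal is correct and follows essentially the same argument as the paper. Monotone, bounded merit values together with Lemma~\ref{lem:Step_F}(iii) force $\alpha_k\to 0$; then the failed Armijo test at $\beta^{j_k-1}$, the mean value theorem, and a convergent subsequence of $(x_k,p_k)$ in $T\M$ give the contradiction. The only cosmetic difference is the final step: you pass to the limit on both sides to get $(1-\varepsilon)\langle \grad F(x^\ast),p^\ast\rangle_{x^\ast}\geq 0$, whereas the paper shows that the difference of the directional derivatives tends to zero and hence that $\langle \grad F(x_k),p_k\rangle_{x_k}\to 0$ along the subsequence.
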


\begin{proof}
This assertion is shown by contradiction. Suppose that $\mathrm{card}(\cal{I})<\infty$, $\mathrm{card}(\cal{J})<\infty$, and $\mathrm{card}(\cal{K})=\infty$. From item~(i) of Lemma~\ref{lem:Step_F}, there exist $\hat{k}\in\mathbb{N}$, $\widehat{y} \in \R^{m}$, $\widehat{z} \in \R^{\ell}$, $\widehat{\sigma} > 0$, and $\widehat{\gamma} > 0$ such that $k \in \cal{K}$, $\widehat{y}_k = \widehat{y}$, $\widehat{z}_k = \widehat{z}$, $\sigma_k = \widehat{\sigma}$, and $\gamma_k = \widehat{\gamma}$ for all $k \geq \widehat{k}$. In the following, we assume that $k \geq \widehat{k}$. From the boundedness of $\{ x_k \}$, there exist $v \in \M$ and $\eta >0$ such that $\{ x_k \} \subset B(v, \eta)$. Since $\M$ is connected and complete, we have from~\cite[Corollary~2.105]{Gallot:2004} that $B(v, \eta)$ is compact. Then, it is clear that 
\begin{align}
| F(x_{k}; \widehat{y}, \widehat{z}, \widehat{\sigma}) | \leq \sup \{ |F(x; \widehat{y}, \widehat{z}, \widehat{\sigma})| ; x \in B(v,\eta) \} < \infty. \label{bounded:F_value}
\end{align}
Using $x_{k+1} = R_{x_k}(\beta^{j_k} p_k)$,~\eqref{eq:armijo-rule}, and~\eqref{eq:gradient} yields
\begin{gather}
\langle \grad F(x_k; \widehat{y}, \widehat{z}, \widehat{\sigma}), p_k \rangle_{x_k} \leq 0, \label{ineq:descent_direction}
\\
-\varepsilon \beta^{j_k} \langle \grad F(x_k; \widehat{y}, \widehat{z}, \widehat{\sigma}), p_k \rangle_{x_k} \leq F(x_k; \widehat{y}, \widehat{z}, \widehat{\sigma}) - F(x_{k+1}; \widehat{y}, \widehat{z}, \widehat{\sigma}). \label{ineq:Armijo_ineq}
\end{gather}
Thus, these inequalities imply $F(x_{k+1}; \widehat{y}, \widehat{z}, \widehat{\sigma}) \leq F(x_{k}; \widehat{y}, \widehat{z}, \widehat{\sigma})$. This fact and~\eqref{bounded:F_value} yield that $\{ F(x_k; \widehat{y}, \widehat{z}, \widehat{\sigma}) \}$ is bounded and monotonically nonincreasing, that is, it is convergent. It then follows from~\eqref{ineq:Armijo_ineq} that
\begin{align}
\lim_{k \to \infty} \beta^{j_k} \langle \grad F(x_k; \widehat{y}, \widehat{z}, \widehat{\sigma}), p_k \rangle_{x_k} = 0.
\end{align}
From this equality and item~(iii) of Lemma~\ref{lem:Step_F}, we have $\lim_{k \to \infty} \beta^{j_k} = 0$. By combining this fact and item~(ii) of Lemma~\ref{lem:Step_F}, there exist $x^{\ast} \in \M$, $p^{\ast} \in T_{x^{\ast}}{\cal M}$, and ${\cal N} \subset \mathbb{N}$ such that $j_k \to \infty$ and $(x_{k}, p_{k}) \to (x^{\ast}, p^{\ast})$ in $T{\cal M}$ as ${\cal N} \ni k \to \infty$. Now, let $k \in {\cal N}$, and assume without loss of generality that $j_{k} \geq 1$. For simplicity, we denote 
\begin{align*}
\delta_{k} \coloneqq \beta^{j_{k}-1}, \quad
\widehat{F}(\delta; x_k, p_k) \coloneqq F(R_{x_k}(\delta p_k); \widehat{y}, \widehat{z}, \widehat{\sigma}) \quad \forall \delta \geq 0.
\end{align*}
The chain rule of the derivative leads to
\begin{align}
\widehat{F}^{\prime}(\delta; x_k, p_k) = \langle \grad F(R_{x_k}(\delta p_k); \widehat{y}, \widehat{z}, \widehat{\sigma}), \rmD R_{x_k}(\delta p_k)[p_k] \rangle_{R_{x_k}(\delta p_k)} \quad \forall \delta \geq 0. \label{eq:Fprime_delta}
\end{align}
Note that $R_{x_k}(0) = x_k$ and that $\rmD R_{x_k}(0)$ is the identity mapping on $T_{x_k} \M$. From these facts, we also have
\begin{align}
\widehat{F}^{\prime}(0; x_k, p_k) = \langle \grad F(x_k; \widehat{y}, \widehat{z}, \widehat{\sigma}), p_k \rangle_{x_k}, \label{eq:Fprime0}
\end{align}
Then, Armijo's condition fails for $\delta_{k}$, that is,
\begin{align*}
\widehat{F}(0; x_k, p_k) + \varepsilon \delta_k \widehat{F}^{\prime}(0; x_k, p_k) = \widehat{F}(0; x_k, p_k) + \varepsilon \delta_k \langle \grad F(x_k; \widehat{y}, \widehat{z}, \widehat{\sigma}), p_k \rangle_{x_k} < \widehat{F}(\delta_k; x_k, p_k).
\end{align*}
It then follows from $\varepsilon \in (0,1)$,~\eqref{ineq:descent_direction}, and~\eqref{eq:Fprime0} that
\begin{align}
0 \leq (\varepsilon - 1) \widehat{F}^{\prime}(0; x_k, p_k) \leq \frac{\widehat{F}(\delta_k; x_k, p_k) - \widehat{F}(0; x_k, p_k)}{\delta_k} - \widehat{F}^{\prime}(0; x_k, p_k). \label{ineq:Fprime}
\end{align}
The mean value theorem guarantees $(\widehat{F}(\delta_k; x_k, p_k)-\widehat{F}(0; x_k, p_k))/\delta_k  = \widehat{F}^{\prime}(\theta_k\delta_k; x_k, p_k)$ for some $\theta_k\in(0,1)$. Combining this fact,~\eqref{eq:Fprime0}, and~\eqref{ineq:Fprime} yields
\begin{align}
0 \leq (\varepsilon - 1) \langle \grad F(x_k; \widehat{y}, \widehat{z}, \widehat{\sigma}), p_k \rangle_{x_k} \leq \widehat{F}^{\prime}(\theta_k \delta_k; x_k, p_k) - \widehat{F}^{\prime}(0; x_k, p_k). \label{ineq:gradF_xk}
\end{align}
Now, exploiting~\eqref{eq:Fprime_delta} leads to
\begin{align}
\begin{aligned} \label{eq:Fprime_thetadelta}
& \widehat{F}^{\prime}(\theta_k \delta_k; x_k, p_k) - \widehat{F}^{\prime}(0; x_k, p_k)
\\
& = \langle \grad F(R_{x_k}(\theta_k \delta_k p_k); \widehat{y}, \widehat{z}, \widehat{\sigma}), \rmD R_{x_k}(\theta_k \delta_k p_k)[p_k] \rangle_{R_{x_k}(\theta_k \delta_k p_k)} 
\\
& \hspace{65mm} - \langle \grad F(x_k; \widehat{y}, \widehat{z}, \widehat{\sigma}), p_k \rangle_{x_k}.
\end{aligned}
\end{align}
Recall that $(x_{k}, \theta_{k}\delta_{k}p_{k}) \to (x^{\ast}, 0)$ in $T{\cal M}$ as ${\cal N} \ni k \to \infty$.
Then, taking ${\cal N} \ni k \to \infty$ in~\eqref{eq:Fprime_thetadelta} implies
\begin{align}
\lim_{{\cal N} \ni k \to \infty} \left( \widehat{F}^{\prime}(\theta_k \delta_k; x_k, p_k) - \widehat{F}^{\prime}(0; x_k, p_k) \right) = 0. \label{lim:Fprime_theta}
\end{align}
By using~\eqref{ineq:gradF_xk} and~\eqref{lim:Fprime_theta}, we obtain
\begin{align*}
\lim_{{\cal N} \ni k \to \infty} \langle \grad F(x_k; \widehat{y}, \widehat{z}, \widehat{\sigma}), p_k \rangle_{x_k} = 0.
\end{align*}
This fact contradicts item (iii) of Lemma~\ref{lem:Step_F}. Therefore, the assertion is proven.
\end{proof}

To establish the main convergence theorem, we first discuss convergence separately for the cases ${\rm card}({\cal I}) = \infty$ and ${\rm card}({\cal I}) < \infty$.

\begin{proposition}\label{prop:KKT}
Suppose that Assumption~{\rm \ref{ass:grad}} is satisfied. Let $\{x_k\}$ be a bounded sequence. If $\mathrm{card}(\mathcal{I})=\infty$, then $\{ x_k \}$ has an accumulation point $x^{\ast} \in \M$, and $x^{\ast}$ satisfies the KKT conditions of~\eqref{pro:RNLO}.
\end{proposition}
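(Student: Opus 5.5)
Since $\mathrm{card}(\mathcal{I})=\infty$, item~(i) of Lemma~\ref{lem:cardIJ} gives $\phi_k \to 0$ or $\psi_k \to 0$, according to whether Step~4.1 or Step~4.2 occurs infinitely often. I would split the argument along these two cases. In each case let $\mathcal{I}_1$ (resp.\ $\mathcal{I}_2$) be the infinite set of iterations $k$ at which Step~4.1 (resp.\ Step~4.2) is executed. For $k$ in that set, the acceptance test gives
\begin{align*}
\Phi_\theta(x_{k+1},y_{k+1},z_{k+1}) \le \tfrac12 \phi_k \quad (k\in\mathcal{I}_1), \qquad \Psi_\theta(x_{k+1},y_{k+1},z_{k+1}) \le \tfrac12 \psi_k \quad (k\in\mathcal{I}_2),
\end{align*}
because $(y_{k+1},z_{k+1})=(\widetilde y_{k+1},\widetilde z_{k+1})$ there.

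The first step is to show that the relevant right-hand side tends to zero along the chosen set. The parameter $\phi_k$ is nonincreasing and is halved at each $k\in\mathcal{I}_1$, so $\phi_k\to0$ whenever $\mathcal{I}_1$ is infinite; the same holds for $\psi_k$ on $\mathcal{I}_2$. Since $\theta\in(0,1)$ is fixed, both $\Phi_\theta$ and $\Psi_\theta$ dominate $\theta\, r$. Hence in either case $r(x_{k+1},y_{k+1},z_{k+1})\to0$ along an infinite index set.

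Next I would extract limits. The points $\{x_{k+1}\}$ lie in the bounded set $B(v,\eta)$, which is compact by \cite[Corollary~2.105]{Gallot:2004}, as used in Lemma~\ref{lem:Step_F}. The multipliers satisfy $y_{k+1}=\widetilde y_{k+1}\in C$ and $z_{k+1}=\widetilde z_{k+1}\in D$ by the acceptance test, and $C$ and $D$ are compact. So a further subsequence gives $(x_{k+1},y_{k+1},z_{k+1})\to(x^\ast,y^\ast,z^\ast)$ with $z^\ast\in D\subset\R^\ell_+$. This $x^\ast$ is the accumulation point in the statement. This is exactly where the stabilization pays off: the multipliers are bounded for free, with no constraint qualification needed.

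The last step is continuity: $r$ is continuous on $\M\times\R^m\times\R^\ell$. This uses (A1) together with the continuity of $(x,y,z)\mapsto\|\grad_x L(x,y,z)\|_x$, which I would justify in a chart as in Lemma~\ref{lem:Step_F}(ii), since the norm is taken in varying tangent spaces. Passing to the limit then gives $r(x^\ast,y^\ast,z^\ast)=0$. Therefore $g(x^\ast)=0$, $[-h(x^\ast)]_+=0$, $\grad_x L(x^\ast,y^\ast,z^\ast)=0$ and $\langle z^\ast,h(x^\ast)\rangle=0$, and together with $z^\ast\ge0$ these are exactly the KKT conditions. The only delicate point is this continuity of the Riemannian gradient norm across base points, and it is handled by the same chart/tangent-bundle argument already used in the paper.
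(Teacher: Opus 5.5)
Your proof is correct and follows the paper's argument: use the Step~4.1 (or 4.2) acceptance test at iteration $k$ to bound the residual at $(x_{k+1},y_{k+1},z_{k+1})$, extract a convergent subsequence from the compact ball and the compact sets $C$, $D$, and pass to the limit by continuity. The only cosmetic difference is that the paper handles the two cases separately through the continuity of $\Phi_\theta$ and $\Psi_\theta$, while you merge them via $\theta\, r \le \min\{\Phi_\theta,\Psi_\theta\}$ and work with $r$ directly.
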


\begin{proof}
Let $\mathcal{I}_{\Phi}$ and $\mathcal{I}_{\Psi}$ denote the sets of indices at which Steps~4.1 and~4.2, respectively, are performed. Thus, we can easily see that $\mathcal{I}=\mathcal{I}_{\Phi}\cup\mathcal{I}_{\Psi}$ and $\mathcal{I}_{\Phi}\cap\mathcal{I}_{\Psi}=\emptyset$.
Since $\mathrm{card}(\mathcal{I})=\infty$, at least one of $\mathcal{I}_{\Phi}$ and $\mathcal{I}_{\Psi}$ is infinite. Now, suppose first that $\mathrm{card}(\mathcal{I}_{\Phi})=\infty$, and define $\mathcal{P}_{\Phi}\coloneqq\{k \in \mathbb{N};\ k-1 \in \mathcal{I}_{\Phi}\}$. 
For every $k \in {\cal P}_{\Phi}$, Step~4.1 gives
\begin{gather}
y_k = \widetilde y_k \in C \subset \R^{m}, \quad z_k =\widetilde z_k \in D \subset \R^{\ell}_{+}, \quad \Phi_{\theta}(x_k, y_k, z_k)  \leq \frac{1}{2}\phi_{k-1} = \phi_k, \label{eq:propKKT-residual-Phi}
\end{gather}
where we recall that the sets $C$ and $D$ are compact. Meanwhile, from the boundedness of $\{ x_k \}$, there exist $v \in \M$ and $\eta > 0$ such that $\{ x_k \} \subset B(v, \eta)$. We recall that $\M$ is complete and connected. It then follows from~\cite[Corollary~2.105]{Gallot:2004} that $B(v, \eta)$ is compact. Hence, the sequence $\{ x_k \}_{k \in {\cal P}_{\Phi}} \subset \M$ is sequentially compact. By combining this fact and~\eqref{eq:propKKT-residual-Phi}, there exist ${\cal Q} \subset {\cal P}_{\Phi}$, $x^{\ast} \in \M$, $y^{\ast}\in C \subset \R^{m}$, and $z^{\ast}\in D \subset \R^{\ell}_{+}$ such that $x_k \to x^{\ast}$, $y_k \to y^{\ast}$, and $z_k \to z^{\ast}$ as ${\cal Q} \ni k \to \infty$.
Noting ${\rm card}({\cal I}_{\Phi}) = \infty$ and the updating rule for $\phi_k$ yields that $\phi_k \to 0$ as $k \to \infty$. Hence, using~\eqref{eq:propKKT-residual-Phi} and the continuity of $\Phi_{\theta}$ implies $\Phi_{\theta}(x^{\ast},y^{\ast},z^{\ast})=0$. From the definition of $\Phi_{\theta}$ and $\theta\in(0,1)$, the KKT conditions of~\eqref{pro:RNLO} hold at $x^{\ast}$.
\par
On the other hand, the case of $\mathrm{card}(\mathcal{I}_{\Psi})=\infty$ can be shown in a way similar to the case of $\mathrm{card}(\mathcal{I}_{\Phi})=\infty$. 
As a result, the assertion holds.
\end{proof}

For later use, we define the constraint-violation function $J \colon \M \to \R$ by
\begin{align*}
J(x) \coloneqq \frac{1}{2}\|g(x)\|^{2} + \frac{1}{2}\|[-h(x)]_{+}\|^{2}.
\end{align*}
Since the scalar function \(t\mapsto \frac12[-t]_{+}^{2}\) is continuously differentiable, the function $J$ is continuously differentiable on $\M$, and
\begin{align}
\grad J(x)
 = \rmD g(x)^{\ast}[g(x)]
   - \rmD h(x)^{\ast}\big[ [-h(x)]_{+} \big].
\label{eq:grad-feasibility-measure}
\end{align}
Thus, a point \(x\in\M\) is stationary for the feasibility problem
\begin{align}
\begin{aligned} \label{pro:feasibility}
& \mini_{x\in\M} && J(x)
\end{aligned}
\end{align}
if and only if $\grad J(x)=0$.

\begin{proposition}\label{prop:AKKT}
Suppose that Assumption~{\rm \ref{ass:grad}} is satisfied. Let $\{ x_k \}$ be a bounded sequence. If ${\rm card}({\cal I}) < \infty$ holds, then $\{ x_k \}$ has an accumulation point $x^{\ast} \in \M$, and $x^{\ast}$ satisfies either of the following assertions:
\begin{description}
\item[{\rm (i)}] $x^{\ast}$ is an AKKT point of~\eqref{pro:RNLO}, and $\{ (x_k, y_k, z_k) \}$ has a subsequence that is an AKKT sequence corresponding to $x^{\ast}$;

\item[{\rm (ii)}] $x^{\ast}$ is an infeasible point of~\eqref{pro:RNLO}, but is a
stationary point of~\eqref{pro:feasibility}, that is, $\grad J(x^{\ast})=0$.
\end{description}
\end{proposition}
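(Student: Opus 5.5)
Since ${\rm card}(\mathcal{I})<\infty$, Proposition~\ref{lem:notinf} rules out the case ${\rm card}(\mathcal{J})<\infty$ with ${\rm card}(\mathcal{K})=\infty$, so ${\rm card}(\mathcal{J})=\infty$. By Lemma~\ref{lem:cardIJ}(ii), $\gamma_k\to0$ and $\sigma_k\to0$. The plan is to work along $\mathcal{P}\coloneqq\{k+1;\ k\in\mathcal{J}\}$. For $k\in\mathcal{J}$, the multipliers produced by Step~4.3 are
\[
y_{k+1}=\widehat y_k-\tfrac{1}{\sigma_k}g(x_{k+1}),\qquad z_{k+1}=\bigl[\widehat z_k-\tfrac{1}{\sigma_k}h(x_{k+1})\bigr]_+ ,
\]
so by~\eqref{eq:graddef}, $\grad_x L(x_{k+1},y_{k+1},z_{k+1})=\grad F(x_{k+1};\widehat y_k,\widehat z_k,\sigma_k)$, and its norm is at most $\gamma_k\to0$. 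Boundedness of $\{x_k\}$ and compactness of closed balls (\cite[Corollary~2.105]{Gallot:2004}), together with Lemma~\ref{lem:cardIJ}(iii), give an infinite $\mathcal{Q}\subset\mathcal{J}$ with $x_{k+1}\to x^\ast$, $\widehat y_k\to\widehat y$ and $\widehat z_k\to\widehat z$ as $\mathcal{Q}\ni k\to\infty$.

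The argument then splits on feasibility of $x^\ast$. \emph{Infeasible case.} Multiply the gradient estimate by $\sigma_k$. Since $\sigma_k y_{k+1}=\sigma_k\widehat y_k-g(x_{k+1})\to-g(x^\ast)$ and $\sigma_k z_{k+1}=[\sigma_k\widehat z_k-h(x_{k+1})]_+\to[-h(x^\ast)]_+$, and $\sigma_k(\grad f(x_{k+1})-\grad_xL)\to0$, we obtain
\[
\rmD g(x^\ast)^\ast[g(x^\ast)]-\rmD h(x^\ast)^\ast\bigl[[-h(x^\ast)]_+\bigr]=0 .
\]
By~\eqref{eq:grad-feasibility-measure} this is exactly $\grad J(x^\ast)=0$, which is alternative (ii). Passing to the limit requires continuity of $\rmD g,\rmD h$ as maps on $T\M$; I would handle this by working in a chart as in Lemma~\ref{lem:Step_F}, or via continuity of $x\mapsto\|\cdot\|_x$ of continuous vector fields.

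\emph{Feasible case.} Here I verify that $\{(x_{k+1},y_{k+1},z_{k+1})\}_{k\in\mathcal{Q}}$ is an AKKT sequence. We have $z_{k+1}\ge0$ and $x_{k+1}\to x^\ast$ with $g(x^\ast)=0$, $h(x^\ast)\ge0$, and the gradient condition holds by the first paragraph. It remains to show $\langle z_{k+1},[h(x_{k+1})]_+\rangle\to0$, taken componentwise. If $h_j(x^\ast)>0$, then for large $k$ we have $\widehat z_{k,j}-h_j(x_{k+1})/\sigma_k<0$ because $\widehat z_k$ is bounded and $\sigma_k\to0$; hence $z_{k+1,j}=0$. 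If $h_j(x^\ast)=0$, then $z_{k+1,j}[h_j(x_{k+1})]_+\le \widehat z_{k,j}[h_j(x_{k+1})]_+$, since $z_{k+1,j}\le\widehat z_{k,j}$ whenever $h_j(x_{k+1})\ge0$; the right side tends to $0$ by boundedness of $\widehat z_k$. This gives alternative (i), with the subsequence of $\{(x_k,y_k,z_k)\}$ indexed by $\mathcal{P}$.

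I expect the main obstacle to be the limit passage in the infeasible case. The multipliers $y_{k+1},z_{k+1}$ may be unbounded, so the scaling by $\sigma_k$ has to be carried out carefully, and the convergence of $\sigma_k\grad_xL(x_{k+1},\cdot)$ across the varying tangent spaces $T_{x_{k+1}}\M$ has to be justified. I would do the latter through the chart/norm-continuity argument already used in Lemma~\ref{lem:Step_F}(ii).
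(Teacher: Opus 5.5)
Your proposal is correct and follows essentially the same route as the paper: $\mathrm{card}(\mathcal{J})=\infty$ via Proposition~\ref{lem:notinf}, the identity $\grad_x L(x_{k+1},y_{k+1},z_{k+1})=\grad F(x_{k+1};\widehat y_k,\widehat z_k,\sigma_k)$ along Step~4.3 iterations, the same componentwise complementarity argument when $x^\ast$ is feasible, and the $\sigma_k$-scaling when $x^\ast$ is infeasible. The limit passage you flag is handled in the paper by your norm-continuity option: it writes out the exact identity for $\grad J(x_k)-\sigma_{k-1}\grad F(x_k;\widehat y_{k-1},\widehat z_{k-1},\sigma_{k-1})$, bounds $\|\grad J(x_k)\|_{x_k}$ using nonexpansiveness of $[\cdot]_+$ and bounded operator norms of $\rmD g,\rmD h$ on a compact set, and then invokes continuity of $x\mapsto\|\grad J(x)\|_x$, so no comparison of vectors in different tangent spaces is needed.
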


\begin{proof}
To begin with, we show that ${\rm card}({\cal J}) = \infty$. Assume to the contrary that ${\rm card}({\cal J}) < \infty$. Since $\mathcal{I}$, $\mathcal{J}$, and $\mathcal{K}$ form a partition of $\mathbb{N}\cup\{0\}$, the assumptions $\mathrm{card}(\mathcal{I})<\infty$ and $\mathrm{card}(\mathcal{J})<\infty$ would imply $\mathrm{card}(\mathcal{K})=\infty$. But, this case is excluded by Proposition~\ref{lem:notinf}. Consequently, it is clear that $
\mathrm{card}(\mathcal{J})=\infty$. 
\par
Now, we define ${\cal P} \coloneqq \{ k \in \mathbb{N};\ k-1 \in \mathcal{J} \}$. Note that $\{ x_k \}_{k \in {\cal P}} \subset \M$ is sequentially compact. Thus, there exist $x^{\ast} \in \M$ and ${\cal Q} \subset {\cal P}$ such that $x_k \to x^{\ast}$ as ${\cal Q} \ni k \to \infty$. Let $k \in \mathcal{Q}$ be arbitrary. Noting $k-1 \in {\cal J}$ gives
\begin{align}
y_k &=\widehat y_{k-1}-\frac{1}{\sigma_{k-1}}g(x_k), \label{eq:propAKKT-yk}
\\
z_k &=\left[\widehat z_{k-1}-\frac{1}{\sigma_{k-1}}h(x_k)\right]_{+}.
\label{eq:propAKKT-zk}
\end{align}
From~\eqref{eq:graddef},~\eqref{eq:propAKKT-yk}, and~\eqref{eq:propAKKT-zk}, we have $\grad_x L(x_k,y_k,z_k) = \grad F(x_k;\widehat y_{k-1},\widehat z_{k-1},\sigma_{k-1})$. This equality and the acceptance test in Step~4.3 imply $\| \grad_x L(x_k,y_k,z_k) \|_{x_k} \leq\gamma_{k-1}$. Since $\gamma_{k-1} \to 0~(k \to \infty)$ from item~(ii) of Lemma~\ref{lem:cardIJ}, we have
\begin{align}
\lim_{\mathcal{Q} \ni k \to \infty} \| \grad_x L(x_k,y_k,z_k) \|_{x_k} = 0. \label{eq:propAKKT-stationarity}
\end{align}
It remains to verify the asymptotic complementarity condition. By item~(iii) of Lemma~\ref{lem:cardIJ}, there exists $c > 0$ such that
\begin{align}
\big| [\widehat z_k]_j \big| \leq c \quad \forall k \in \mathbb{N} \cup \{ 0 \}, ~ \forall j \in \{ 1,\ldots,\ell \}.
\label{eq:propAKKT-hatz-bound}
\end{align}
Let $j\in\{1, \ldots, \ell\}$. Now, we consider the two cases: (a) $x^{\ast}$ is feasible for~\eqref{pro:RNLO}; (b) otherwise.
\par
Case (a): If $h_j(x^{\ast})>0$, then there exists $\widetilde{n} \in \mathbb{N}$ such that $h_j(x_k) > 0$ for all $k \geq \widetilde{n}$ with $k \in {\cal Q}$. Since $\sigma_{k-1} \to 0~(k \to \infty)$ from item~(ii) of Lemma~\ref{lem:cardIJ}, noting~\eqref{eq:propAKKT-hatz-bound} gives $[\widehat z_{k-1}]_j - \frac{1}{\sigma_{k-1}} h_j(x_k) < 0$ for all $k \geq \widetilde{n}$ with $k \in {\cal Q}$. Hence, using~\eqref{eq:propAKKT-zk} implies $[z_k]_j=0$ for all $k \geq \widetilde{n}$ with $k \in \mathcal{Q}$, and therefore $[z_k]_j[h_j(x_k)]_{+} \to 0$ as $\mathcal{Q} \ni k \to \infty$.
\par
If $h_j(x^{\ast}) = 0$, then~\eqref{eq:propAKKT-zk} and~\eqref{eq:propAKKT-hatz-bound} give $0 \leq [z_k]_j [h_j(x_k)]_{+} \leq c [h_j(x_k)]_{+}$ for all $k \in \mathcal{Q}$. Since $h_j(x_k) \to 0$, we obtain $[z_k]_j[h_j(x_k)]_{+} \to 0$ as $\mathcal{Q} \ni k\to\infty$. From these facts, we conclude that
\begin{align}
\lim_{\mathcal{Q} \ni k \to \infty} \langle z_k, [h(x_k)]_{+} \rangle = 0. \label{eq:propAKKT-complementarity}
\end{align}
Finally, it is clear that $x_k\to x^{\ast}~({\cal Q} \ni k \to \infty)$ and $z_k\in\R^{\ell}_{+}$ for $k \in \mathcal{Q}$. Together with~\eqref{eq:propAKKT-stationarity} and~\eqref{eq:propAKKT-complementarity}, this proves that $\{(x_k,y_k,z_k)\}_{k \in \mathcal{Q}}$ is an AKKT sequence corresponding to $x^{\ast}$. This result shows that item~(i) is satisfied.
\par
Case (b): The positive homogeneity of the projection onto $\R^{\ell}_{+}$ gives $\sigma_{k-1} [\widehat z_{k-1}-\frac{1}{\sigma_{k-1}}h(x_k)]_{+}
 = [\sigma_{k-1}\widehat z_{k-1}-h(x_k)]_{+}$. Hence, multiplying~\eqref{eq:graddef} by $\sigma_{k-1}$ and exploiting~\eqref{eq:grad-feasibility-measure}, we obtain
\begin{align}
\begin{aligned} \label{eq:feas-grad-identity}
& \grad J(x_k) -\sigma_{k-1}\grad F(x_k;\widehat{y}_{k-1},\widehat{z}_{k-1},\sigma_{k-1})
\\
& = -\sigma_{k-1}\grad f(x_k) +\sigma_{k-1}\rmD g(x_k)^{\ast}[\widehat{y}_{k-1}]
\\
&\qquad\quad +\rmD h(x_k)^{\ast} \big[ [\sigma_{k-1}\widehat{z}_{k-1}-h(x_k)]_{+} - [-h(x_k)]_{+} \big].
\end{aligned} 
\end{align}
The nonexpansiveness of the projection $\mathbb{R}^{\ell} \ni v \mapsto [v]_+ \in \mathbb{R}_+^{\ell}$ leads to
\begin{align}
\|[\sigma_{k-1}\widehat z_{k-1}-h(x_k)]_{+}-[-h(x_k)]_{+}\| \leq \sigma_{k-1}\|\widehat z_{k-1}\|. \label{eq:projection-nonexpansive}
\end{align}
Since the acceptance test in Step~4.3 is true at the $(k-1)$-th iteration from $k \in {\cal Q} \subset {\cal P}$, it is clear that $\|\grad F(x_k;\widehat y_{k-1},\widehat z_{k-1},\sigma_{k-1})\|_{x_k} \leq \gamma_{k-1}$. It then follows from~\eqref{eq:feas-grad-identity} and \eqref{eq:projection-nonexpansive} that
\begin{align}
\begin{aligned} \label{eq:feas-grad-bound}
\|\grad J(x_k)\|_{x_k}
&\leq
\sigma_{k-1} \gamma_{k-1} + \sigma_{k-1} \| \grad f(x_{k}) \|_{x_k}
\\
& \qquad \qquad \qquad + \sigma_{k-1} \| \rmD g(x_{k}) \| \| \widehat{y}_{k-1} \| + \sigma_{k-1} \| \rmD h(x_k) \| \| \widehat{z}_{k-1} \|,
\end{aligned}
\end{align}
where $\| \rmD g(x_k) \|$ and $\| \rmD h(x_k) \|$ denote the operator norms of $\rmD g(x_k)$ and $\rmD h(x_{k})$, respectively. By items~(ii) and~(iii) of Lemma~\ref{lem:cardIJ}, we recall that $\sigma_{k-1} \to 0~(k \to \infty)$, and $\{ \| \widehat y_k \| \}$ and $\{ \| \widehat z_k \| \}$ are bounded. In addition, because $x_k\to x^{\ast}~({\cal Q} \ni k \to \infty)$ and (A1) of Assumption~\ref{ass:grad} holds, the boundedness of $\{ \|\grad f(x_k)\|_{x_k} \}_{k \in {\cal Q}}$, $\{ \| \rmD g(x_k) \| \}_{k \in {\cal Q}}$, and $\{ \| \rmD h(x_k) \| \}_{k \in {\cal Q}}$ is ensured. Therefore, combining these facts and~\eqref{eq:feas-grad-bound} yields $\|\grad J(x_k)\|_{x_k} \to 0$ as ${\cal Q} \ni k\to\infty$. Since $\{ x_k \}_{k \in {\cal Q}}$ converges to $x^{\ast}$, the continuity of $\M \ni x \mapsto\|\grad J(x)\|_x \in \R$ gives $\grad J(x^{\ast})=0$. This fact implies that item~(ii) holds.
\end{proof}

Finally, we provide the main convergence theorem by exploiting the above arguments.

\begin{theorem}\label{thm:global}
Suppose that Assumption~{\rm \ref{ass:grad}} is satisfied. Suppose also that the sequence $\{ x_k \}$ generated by Algorithm~{\rm \ref{alg:main}} is bounded. Then, the sequence $\{ x_k \}$ has at least one accumulation point $x^{\ast} \in \M$, which satisfies one of the following assertions:
\begin{description}
\item[{\rm (i)}] $x^{\ast}$ is a KKT point of~\eqref{pro:RNLO};

\item[{\rm (ii)}] $x^{\ast}$ is an AKKT point of~\eqref{pro:RNLO}, and $\{ (x_k, y_k, z_k) \}$ has a subsequence that is an AKKT sequence corresponding to $x^{\ast}$;

\item[{\rm (iii)}] $x^{\ast}$ is an infeasible point of~\eqref{pro:RNLO}, but is a
stationary point of~\eqref{pro:feasibility}, that is, $\grad J(x^{\ast})=0$.
\end{description}
Moreover, if $x^{\ast}$ satisfies the ERCQ, then it is a KKT point of~\eqref{pro:RNLO}.
\end{theorem}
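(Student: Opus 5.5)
The first part of Theorem~\ref{thm:global} follows by splitting on $\mathrm{card}(\mathcal{I})$. If $\mathrm{card}(\mathcal{I})=\infty$, Proposition~\ref{prop:KKT} gives an accumulation point that is a KKT point, which is (i). If $\mathrm{card}(\mathcal{I})<\infty$, Proposition~\ref{prop:AKKT} gives an accumulation point satisfying (ii) or (iii). Both propositions need only Assumption~\ref{ass:grad} and boundedness of $\{x_k\}$; the case $\mathrm{card}(\mathcal{K})=\infty$ with $\mathcal{I},\mathcal{J}$ finite is already excluded inside Proposition~\ref{prop:AKKT} via Proposition~\ref{lem:notinf}. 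So the trichotomy is immediate.

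For the ``moreover'' part, suppose $x^{\ast}$ satisfies the ERCQ. In case (i) there is nothing to show. In cases (ii) and (iii), $x^{\ast}$ is the limit of $\{x_k\}_{k\in\mathcal{Q}}$ with $k-1\in\mathcal{J}$, and along this subsequence
\begin{align*}
\|\grad_x L(x_k,y_k,z_k)\|_{x_k}\le\gamma_{k-1}\to0, \qquad y_k=\widehat y_{k-1}-\tfrac{1}{\sigma_{k-1}}g(x_k), \qquad z_k=\bigl[\widehat z_{k-1}-\tfrac{1}{\sigma_{k-1}}h(x_k)\bigr]_+ .
\end{align*}
I plan to show first that $\{(y_k,z_k)\}_{k\in\mathcal{Q}}$ is bounded. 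Suppose instead that $\tau_k:=\|(y_k,z_k)\|\to\infty$ along a further subsequence, and normalize so that $(y_k,z_k)/\tau_k\to(\bar y,\bar z)\neq0$ with $\bar z\ge0$. Dividing the gradient of the Lagrangian by $\tau_k$ and passing to the limit, using continuity of $\grad f$, $\rmD g$, $\rmD h$ in a chart as in Lemma~\ref{lem:Step_F}(ii), gives $\rmD g(x^{\ast})^{\ast}[\bar y]+\rmD h(x^{\ast})^{\ast}[\bar z]=0$. Moreover, if $h_j(x^{\ast})>0$, then, since $\widehat z$ is bounded and $\sigma_{k-1}\to0$, we get $[z_k]_j=0$ eventually, hence $\bar z_j=0$.

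To rule out $(\bar y,\bar z)\neq0$ I use the ERCQ in its dual form. By the interior condition, a small vector $-\epsilon(\bar y,\bar z)$ can be written as $(g(x^{\ast})+\rmD g(x^{\ast})\xi,\ h(x^{\ast})+\rmD h(x^{\ast})\xi-w)$ with $w\ge0$. Taking the inner product with $(\bar y,\bar z)$ gives
\begin{align*}
-\epsilon\|(\bar y,\bar z)\|^2=\langle\bar y,g(x^{\ast})\rangle+\langle\bar z,h(x^{\ast})\rangle-\langle\bar z,w\rangle .
\end{align*}
The terms involving $\xi$ vanish by the limiting gradient relation. It then suffices to show $\langle\bar y,g(x^{\ast})\rangle+\langle\bar z,h(x^{\ast})\rangle\ge0$, which yields the contradiction $\epsilon\|(\bar y,\bar z)\|^2\le0$.

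This sign condition is the main obstacle in the infeasible case, and I would get it from the multiplier formulas. Since $\sigma_{k-1}y_k=\sigma_{k-1}\widehat y_{k-1}-g(x_k)$ with $\widehat y$ bounded, the direction $\bar y$ is a nonnegative multiple of $-g(x^{\ast})$ when $\tau_k\sigma_{k-1}$ stays bounded below, and $\bar y=0$ contributes nothing otherwise. Similarly, $\bar z$ is a nonnegative multiple of $[-h(x^{\ast})]_+$ or vanishes on components with $h_j(x^{\ast})>0$. Plugging this in gives $\langle\bar y,g\rangle+\langle\bar z,h\rangle=-c\,(\|g(x^{\ast})\|^2+\|[-h(x^{\ast})]_+\|^2)\le0$, which has the wrong sign. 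I therefore expect to redo the dual argument with the feasibility structure rather than with a generic $(\bar y,\bar z)$. Concretely, in case (iii), with $\grad J(x^{\ast})=0$, choose the ERCQ point $-\epsilon(g(x^{\ast}),-[-h(x^{\ast})]_+)$ and pair it with $(g(x^{\ast}),-[-h(x^{\ast})]_+)$. Then $\grad J(x^{\ast})=0$ kills the $\xi$-term, the $w$-term has the right sign, and we get
\begin{align*}
-\epsilon\bigl(\|g\|^2+\|[-h]_+\|^2\bigr)=\|g\|^2+\|[-h]_+\|^2+\langle[-h]_+,w\rangle \ge \|g\|^2+\|[-h]_+\|^2 ,
\end{align*}
using $\langle[-h]_+,h\rangle=-\|[-h]_+\|^2$. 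This forces feasibility, so case (iii) cannot occur under the ERCQ. In the feasible case (ii), the same dual argument gives boundedness of the multipliers, because the terms $\langle\bar y,g(x^{\ast})\rangle$ and $\langle\bar z,h(x^{\ast})\rangle$ vanish: $g(x^{\ast})=0$ and $\bar z_j=0$ whenever $h_j(x^{\ast})>0$. Extracting convergent multipliers $(y^{\ast},z^{\ast})$ and passing to the limit in the AKKT relations then gives $\grad_xL(x^{\ast},y^{\ast},z^{\ast})=0$, $z^{\ast}\ge0$ and $\langle z^{\ast},h(x^{\ast})\rangle=0$, so $x^{\ast}$ is a KKT point.
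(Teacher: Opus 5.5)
Your first part (the case split via Propositions~\ref{prop:KKT} and~\ref{prop:AKKT}) is correct. So is your exclusion of case (iii) under the ERCQ, where you pair a representation of a point of $\mathcal{T}$ with $(g(x^{\ast}),-[-h(x^{\ast})]_{+})$ and use $\grad J(x^{\ast})=0$. The paper runs the same computation with a representation of $0\in\mathcal{T}$, and your argument works verbatim with $\epsilon=0$.

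The gap is in the multiplier-boundedness step that you invoke for case (ii). You write $-\epsilon(\bar y,\bar z)=(g(x^{\ast})+\rmD g(x^{\ast})[\xi],\,h(x^{\ast})+\rmD h(x^{\ast})[\xi]-w)$ with $w\geq0$, and you correctly obtain $-\epsilon\|(\bar y,\bar z)\|^{2}=\langle\bar y,g(x^{\ast})\rangle+\langle\bar z,h(x^{\ast})\rangle-\langle\bar z,w\rangle$. However, the claimed implication is false. The term $-\langle\bar z,w\rangle\leq0$ sits on the same side as the quantity you want to be nonnegative, so you only get $\epsilon\|(\bar y,\bar z)\|^{2}\leq\langle\bar z,w\rangle$, which contradicts nothing. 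In the feasible case this choice is in fact vacuous: the identity collapses to $\langle\bar z,w\rangle=\epsilon\|(\bar y,\bar z)\|^{2}$, which is compatible with $\bar z\geq0$ and $w\geq0$. So, as written, boundedness of $\{(y_k,z_k)\}_{k\in\mathcal{Q}}$ is not established, and the KKT conclusion in case (ii) does not follow.

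The repair is to use the point $+\epsilon(\bar y,\bar z)\in\mathcal{T}$, as the paper does. Then $\epsilon\|(\bar y,\bar z)\|^{2}=\langle\bar y,g(x^{\ast})\rangle+\langle\bar z,h(x^{\ast})\rangle-\langle\bar z,w\rangle$, and the condition you need is $\langle\bar y,g(x^{\ast})\rangle+\langle\bar z,h(x^{\ast})\rangle\leq0$. In the feasible case this quantity is zero; your observation that $\bar z_j=0$ whenever $h_j(x^{\ast})>0$ replaces the paper's use of AKKT complementarity. Hence $\epsilon\|(\bar y,\bar z)\|^{2}\leq-\langle\bar z,w\rangle\leq0$, a contradiction.

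Your infeasible-case value $-c(\|g(x^{\ast})\|^{2}+\|[-h(x^{\ast})]_{+}\|^{2})$, which you dismissed as having the wrong sign, has exactly the favorable sign for the corrected argument. More generally, the Step~4.3 formulas give, for $k\in\mathcal{Q}$,
$\langle y_k,g(x_k)\rangle+\langle z_k,h(x_k)\rangle=\sigma_{k-1}\bigl(\langle y_k,\widehat y_{k-1}-y_k\rangle+\langle z_k,\widehat z_{k-1}-z_k\rangle\bigr)$.
After dividing by $\tau_k$, the right-hand side is at most $\sigma_{k-1}(\|\widehat y_{k-1}\|+\|\widehat z_{k-1}\|)\to0$. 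So $\langle\bar y,g(x^{\ast})\rangle+\langle\bar z,h(x^{\ast})\rangle\leq0$ holds with no case distinction.

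Consequently, the corrected dual argument bounds the multipliers along $\mathcal{Q}$ under the ERCQ before feasibility is known. Feasibility then follows from $g(x_k)=\sigma_{k-1}(\widehat y_{k-1}-y_k)\to0$ and $[\sigma_{k-1}\widehat z_{k-1}-h(x_k)]_{+}=\sigma_{k-1}z_k\to0$. This unified route would make your separate treatment of case (iii) unnecessary, although that treatment is itself correct.
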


\begin{proof}
We notice that the following two cases can be considered: (a) ${\rm card}({\cal I}) = \infty$; (b) ${\rm card}({\cal I}) < \infty$. Hence, the first part of the theorem follows directly from Propositions~\ref{prop:KKT} and~\ref{prop:AKKT}.
\par
It remains to prove the last part of the theorem. From the above arguments, we only need to consider case (b). Hence, assume that ${\rm card}({\cal I}) < \infty$ in the following. By using Proposition~\ref{prop:AKKT}, there exist $x^{\ast} \in \M$ and ${\cal P} \subset \mathbb{N}$ such that $x_k \to x^{\ast}$ as ${\cal P} \ni k \to \infty$, and either assertion~(ii) or~(iii) holds. Moreover, recall that $x^{\ast}$ satisfies the ERCQ. From now on, we will show that assertion~(ii) must hold, that is, $x^{\ast}$ is an AKKT point of~\eqref{pro:RNLO}, and $\{ (x_k, y_k, z_k) \}$ has a subsequence that is an AKKT sequence corresponding to $x^{\ast}$. To this end, we assume, to the contrary, that assertion~(iii) holds, that is, $x^{\ast}$ is infeasible for~\eqref{pro:RNLO} and satisfies $\grad J(x^{\ast}) = 0$. Since $x^{\ast}$ satisfies the ERCQ, there exist $\widehat{\xi} \in T_{x^{\ast}}\M$ and $\widehat w\in\R^{\ell}_{+}$ such that
\begin{align}
-g(x^{\ast}) &= \rmD g(x^{\ast})[\widehat{\xi}],
\label{eq:prop-feas-ercq-g}
\\
\widehat w &= h(x^{\ast}) + \rmD h(x^{\ast})[\widehat{\xi}].
\label{eq:prop-feas-ercq-h}
\end{align}
Combining $[-h(x^{\ast})]_{+} \geq 0$, $\widehat w \geq 0$, and~\eqref{eq:prop-feas-ercq-h} yields
\begin{align}
0
&\geq-\langle [-h(x^{\ast})]_{+},\widehat w\rangle =-\langle [-h(x^{\ast})]_{+},\rmD h(x^{\ast})[\widehat\xi]\rangle + \| [-h(x^{\ast})]_{+} \|^{2},
\label{eq:prop-feas-ercq-ineq}
\end{align}
where we used $\langle [-h(x^{\ast})]_{+}, h(x^{\ast}) \rangle = -\| [-h(x^{\ast})]_{+} \|^{2}$. Meanwhile, using $\grad J(x^{\ast}) = 0$,~\eqref{eq:grad-feasibility-measure}, and~\eqref{eq:prop-feas-ercq-g} implies $0 = \langle\grad J(x^{\ast}),\widehat\xi\rangle_{x^{\ast}} = \langle g(x^{\ast}),\rmD g(x^{\ast})[\widehat\xi]\rangle -\langle [-h(x^{\ast})]_{+},\rmD h(x^{\ast})[\widehat\xi] \rangle = -\|g(x^{\ast})\|^{2} -\langle [-h(x^{\ast})]_{+}, \rmD h(x^{\ast})[\widehat\xi] \rangle$,
that is, $-\langle [-h(x^{\ast})]_{+}, \rmD h(x^{\ast})[\widehat\xi] \rangle = \|g(x^{\ast})\|^{2}$. By substituting this equality into~\eqref{eq:prop-feas-ercq-ineq}, we obtain $0 \geq \|g(x^{\ast})\|^{2}+\|[-h(x^{\ast})]_{+}\|^{2}$, namely, $x^{\ast}$ is feasible for~\eqref{pro:RNLO}. However, this contradicts assertion~(iii).
\par
From the above argument, we can verify that assertion~(ii) holds. Hence, let $\{ (x_k, y_k, z_k) \}_{k \in {\cal Q}}$ be an AKKT sequence corresponding to the AKKT point $x^{\ast}$. Then, it is clear that
\begin{align}
\lim_{{\cal Q} \ni k \to \infty} \| \grad_x L(x_k, y_k, z_k) \|_{x_k} = 0, \quad \lim_{{\cal Q} \ni k \to \infty} \langle z_k, [h(x_k)]_{+} \rangle = 0. \label{equation:gradL_zh}
\end{align}
In what follows, we will show that $\{(y_k, z_k)\}_{k \in {\cal Q}}$ is bounded. Suppose, to the contrary, that it is unbounded. Let us define $\rho_k \coloneqq \sqrt{\|y_k\|^{2}+\|z_k\|^{2}}$ for each $k \in \mathbb{N} \cup \{ 0 \}$. Then, there exists ${\cal R} \subset {\cal Q}$ such that $\{ \rho_k \}_{k \in {\cal R}} \subset \mathbb{R}$ satisfies $\rho_k \to \infty$ as ${\cal R} \ni k \to \infty$. From the definition of $\{ \rho_k \}$, there exist $\overline{y} \in \R^{m}$, $\overline{z} \in \R^{\ell}_{+}$, and ${\cal S} \subset {\cal R}$ such that
\begin{align}
\lim_{{\cal S} \ni k \to \infty} \frac{y_k}{\rho_k} = \overline{y},
\quad
\lim_{{\cal S} \ni k \to \infty} \frac{z_k}{\rho_k} = \overline{z},
\quad
\| \overline{y} \|^{2} + \| \overline{z} \|^{2} = 1.
\label{eq:thm-normalized-multipliers}
\end{align}
Let $k \in {\cal S}$ be arbitrary. Without loss of generality, we can assume that $\rho_k > 0$. It follows from~\eqref{equation:gradL_zh} and $\rho_k \to \infty~({\cal S} \ni k \to \infty)$ that
\begin{align*}
\lim_{{\cal S} \ni k \to \infty} \Bigg\| \frac{\grad f(x_k)}{\rho_k} - \rmD g(x_k)^{\ast} \left[ \frac{y_k}{\rho_k} \right] - \rmD h(x_k)^{\ast}\left[ \frac{z_k}{\rho_k} \right] \Bigg\|_{x_k} = 0,
\quad \lim_{{\cal S} \ni k \to \infty} \left\langle \frac{z_k}{\rho_k}, [h(x_k)]_+ \right\rangle = 0.
\end{align*} 
Using~\eqref{eq:thm-normalized-multipliers} and the continuity of $\grad f$, $\rmD g$, and $\rmD h$ yields
\begin{align}
\rmD g(x^{\ast})^{\ast}[\overline y] + \rmD h(x^{\ast})^{\ast}[\overline z]=0, \quad \langle\overline z,h(x^{\ast})\rangle=0.
\label{eq:thm-abnormal-stationarity}
\end{align}
Now, we define
\begin{align*}
{\cal T} \coloneqq
\begin{bmatrix}
g(x^{\ast})
\\
h(x^{\ast})
\end{bmatrix}
+
\begin{bmatrix}
\rmD g(x^{\ast})
\\
\rmD h(x^{\ast})
\end{bmatrix}
T_{x^{\ast}}\M
-
\begin{bmatrix}
\{0\}
\\
\R^{\ell}_{+}
\end{bmatrix}.
\end{align*}
The RCQ gives $0 \in \mathrm{int}({\cal T})$. Thus, by noting $(\overline{y},\overline{z}) \neq 0$, there exists a sufficiently small positive number $\epsilon > 0$ such that $(\epsilon \overline{y}, \epsilon \overline{z}) \in {\cal T}$. Let $p \coloneqq \epsilon \overline y$ and $q \coloneqq \epsilon \overline z$. Since $x^{\ast}$ is feasible for~\eqref{pro:RNLO}, the element $(p, q) \in {\cal T}$ can be written as $p = \rmD g(x^{\ast})[\xi]$ and $q = h(x^{\ast}) + \rmD h(x^{\ast})[\xi] - w$ for some $\xi\in T_{x^{\ast}}\M$ and $w\in\R^{\ell}_{+}$. It then follows from~\eqref{eq:thm-abnormal-stationarity} that 
\begin{align}
\langle \overline{y}, p \rangle + \langle \overline{z}, q \rangle 
= \langle \rmD g(x^{\ast})^{\ast} [\overline{y}] + \rmD h(x^{\ast})^{\ast} [\overline{z}], \xi \rangle_{x^{\ast}} + \langle \overline{z}, h(x^{\ast}) \rangle - \langle \overline{z}, w \rangle = - \langle \overline{z}, w \rangle \leq 0, \label{ineq:p_q}
\end{align}
where the inequality is derived from $\overline{z} \geq 0$ and $w \geq 0$. Meanwhile, the last equality of~\eqref{eq:thm-normalized-multipliers} and the definitions of $p$ and $q$ imply $\langle \overline{y}, p \rangle + \langle \overline{z}, q \rangle = \epsilon \| \overline{y} \|^2 + \epsilon \| \overline{z} \|^2 = \epsilon$. Thus, using~\eqref{ineq:p_q} leads to $\epsilon \leq 0$. However, this contradicts $\epsilon > 0$. Therefore, $\{(y_k, z_k)\}_{k \in {\cal Q}}$ is bounded.
\par
Consequently, there exist $\mathcal U \subset \mathcal Q$, $y^{\ast} \in \R^{m}$, and $z^{\ast} \in \R^{\ell}_{+}$ such that $y_k \to y^{\ast}$ and $z_k \to z^{\ast}$ as $\mathcal U \ni  k \to \infty$. By noting~\eqref{equation:gradL_zh}, the feasibility of $x^{\ast}$, and the continuity of $\grad f$, $\rmD g$, and $\rmD h$, we obtain
\begin{align*}
\grad_x L(x^{\ast},y^{\ast},z^{\ast}) = 0, \quad \langle z^{\ast},h(x^{\ast})\rangle = 0.
\end{align*}
Together with the feasibility of $x^{\ast}$ and $z^{\ast}\geq0$, these relations show that $(x^{\ast},y^{\ast},z^{\ast})$ satisfies the KKT conditions of~\eqref{pro:RNLO}.
\end{proof}

\section{Numerical experiments}\label{sec:Numerical_experiments}
This section compares Algorithm~\ref{alg:main} with the Riemannian sequential quadratic optimization method proposed by Obara, Okuno, and Takeda~\cite{Obara:2022}, which is referred to as the RSQP method in this paper. Both methods were implemented in \mbox{MATLAB} \mbox{R2025a} using the \mbox{Manopt} library. The experiments were performed on a computer equipped with an Intel Core i9-9900K CPU (3.60 GHz).
\par
For Algorithm~\ref{alg:main}, the algorithmic parameters were set to $\beta \coloneqq 0.5$, $\varepsilon \coloneqq 10^{-4}$, $\theta \coloneqq 10^{-4}$, $C \coloneqq \{ y \in \mathbb{R}^{m}; -10^6e \leq y \leq 10^6e \}$, and $D \coloneqq \{ z \in \mathbb{R}^{\ell}; 0 \leq z \leq 10^6 e \}$, where $e$ denotes a vector of ones of the appropriate dimension. We also set $y_0 \coloneqq 0$, $z_0 \coloneqq 0$, $\sigma_0 \coloneqq 1$, $\phi_0 \coloneqq 1$, $\psi_0 \coloneqq 1$, and $\gamma_0 \coloneqq 1$. The initial primal point $x_0$ is specified separately for each test problem below. For the RSQP method, we used the same initial primal point as that used in Algorithm~\ref{alg:main} and adopted the same algorithmic parameter settings as those employed in the numerical experiments of~\cite{Obara:2022}. For both Algorithm~\ref{alg:main} and the RSQP method, the operator $H_k$ was set to the Riemannian Hessian $\Hess_xL(x_k,y_k,z_k)$ by replacing every eigenvalue smaller
than $10^{-5}$ with $10^{-5}$. At each iteration, the quadratic subproblem in each method was solved using \texttt{quadprog}, a quadratic programming solver available in MATLAB. A run was terminated when either $r(x_k,y_k,z_k)\leq 10^{-6}$ or $k=500$. Thus, an iteration count of 500 indicates that the prescribed residual tolerance was not attained within the maximum number of iterations.
\par
First, we consider the following mathematical program with equilibrium constraints (MPEC) on the unit sphere, which is used in the numerical experiments of~\cite{Yamakawa:2022}:
\begin{align}
\begin{aligned}\label{pro:MPEC}
& \mini_{x \in \mathbb{S}^{n-1}}
&& \frac{1}{2} \langle Ax, x \rangle + \langle b, x \rangle
\\
& \subj
&& \langle Gx+p, Hx+q \rangle = 0,
\\
&&& Gx+p \geq 0,
\\
&&& Hx+q \geq 0,
\end{aligned}
\end{align}
where $\mathbb{S}^{n-1} \coloneqq \{x \in \R^n ; \|x\|_2=1\}$, $A \in \R^{n \times n}$, $G \in \R^{m \times n}$, $H \in \R^{m \times n}$, $b \in \R^n$, $p \in \R^m$, and $q \in \R^m$. The entries of these matrices and vectors were sampled from the uniform distribution on $[-1,1]$. Problem~\eqref{pro:MPEC} is degenerate in the sense that the RCQ fails at every feasible point.
\par
We considered the following five problem sizes:
\begin{align*}
(n,m) \in \{(50,15),(100,15),(100,25),(200,30),(200,45)\}.
\end{align*}
For each pair $(n,m)$, both methods were applied to the same generated problem instance. The initial primal point $x_0$ was generated by the command \verb|M.rand()| in the Manopt library, and the same point was used for both methods.
\par
Second, we consider the following nonnegative low-rank matrix completion problem used in the numerical experiments of~\cite{Obara:2022}:
\begin{align}
\begin{aligned}\label{pro:LRMC}
& \mini_{X\in\mathcal{M}_{p}}
&& \frac{1}{2}
\left\|
\mathcal P_{\mathcal J \setminus \mathcal C}(X-A)
\right\|_{F}^{2}
\\
& \subj
&& X_{ij} \geq 0
\quad
\forall (i,j) \in \mathcal S \backslash \mathcal J,
\\
&&& X_{ij}=A_{ij}
\quad
\forall (i,j) \in \mathcal C,
\end{aligned}
\end{align}
where $\mathcal{M}_{p} \coloneqq \{X \in \R^{q \times s};\ {\rm rank}(X) = p\}$ is the fixed-rank manifold and $\mathcal S \coloneqq \{ 1, \ldots, q \} \times \{ 1, \ldots, s \}$, $\mathcal J \subset \mathcal S$, $\mathcal C \subset \mathcal J$, and $A \in \R^{q \times s}$ are given. The entries of $A$ indexed by $\mathcal J$ are observed, and those indexed by $\mathcal C$ are treated as exact, whereas the entries indexed by $\mathcal J \backslash \mathcal C$ are used in the least-squares objective. For $Z \in \R^{q \times s}$, $\mathcal P_{\mathcal J \backslash \mathcal C}(Z)$ denotes the matrix whose $(i,j)$-entry is $Z_{ij}$ if $(i,j) \in \mathcal J \backslash \mathcal C$ and zero otherwise. Strictly speaking, $\mathcal{M}_{p}$ is not complete and therefore does not satisfy the completeness assumption imposed in the theoretical analysis of this study. Nevertheless, as in~\cite{Obara:2022}, we include problem~\eqref{pro:LRMC} to examine the practical performance of the methods on a representative fixed-rank matrix problem.
\par
As described in~\cite{Obara:2022}, for each triple $(q,s,p)$, we first generated matrices $T \in \R^{q \times p}$ and $V \in \R^{p\times s}$ whose entries were sampled from the uniform distribution on $[0,1]$. This procedure was repeated until ${\rm rank}(TV)=p$, and we then set $A\coloneqq TV$. The sets $\mathcal J$ and $\mathcal C$ were randomly generated so that $|\mathcal J|=\lceil |\mathcal S|/2\rceil$ and $|\mathcal C|=\lceil |\mathcal J|/2\rceil$. We considered the following five problem sizes:
\begin{align*}
(q,s,p)
\in
\{
(5,10,2),
(10,30,3),
(15,30,4),
(20,25,4),
(20,45,7)
\}.
\end{align*}
For each triple $(q,s,p)$, both methods were applied to the same generated problem instance.
\par
According to~\cite{Obara:2022}, we used an approximate solution of the feasibility problem associated with problem~\eqref{pro:LRMC}, with constraint violation no greater than $10^{-2}$, as the common initial point. To obtain such a point, we applied the \verb|conjugategradient| solver in the Manopt library to the following unconstrained problem:
\begin{align*}
\mini_{X\in\mathcal{M}_{p}}
\quad
\frac{1}{2}
\sum_{(i,j)\in \mathcal C}(X_{ij}-A_{ij})^2
+
\frac{1}{2}
\sum_{(i,j)\in \mathcal S \backslash \mathcal J}[-X_{ij}]_+^2.
\end{align*}
Since $A\in\mathcal{M}_{p}$ is componentwise nonnegative, the optimal value of this problem is zero, and its global minimizers coincide exactly with the feasible set of the feasibility problem. Starting from a point generated by \verb|M.rand()|, the computation was restarted from a new random point whenever the square root of twice the resulting objective value was greater than $10^{-2}$. Only the entries of $A$ indexed by $\mathcal C$ were used in this initialization procedure, and the same accepted point was used as $x_0$ for both methods.
\par
The numerical results for problems~\eqref{pro:MPEC} and~\eqref{pro:LRMC} are summarized in Tables~\ref{tb:RSSQP}--\ref{tb:RSQP2}. For problem~\eqref{pro:MPEC}, Algorithm~\ref{alg:main} attained the prescribed tolerance for all five test instances, whereas the RSQP method reached the maximum number of iterations in every case without attaining the tolerance. These results support the effectiveness of the proposed stabilization when the RCQ fails. For problem~\eqref{pro:LRMC}, both methods attained the prescribed tolerance for all five instances, showing that Algorithm~\ref{alg:main} retains performance comparable to the RSQP method on instances for which the latter also works successfully. Moreover, Algorithm~\ref{alg:main} required fewer iterations and less CPU time for the two larger instances. This tendency may be partly attributed to the fact that the stabilized subproblem is always feasible regardless of the infeasibility of the current iteration point, whereas the subproblem of the RSQP method does not enjoy such a guarantee.

\begin{table}[ht]
\centering
\caption{Numerical results for Algorithm~\ref{alg:main} on problem~\eqref{pro:MPEC}.}
\vspace{-2.5mm}
\begin{tabular}{|c||c|c|c|c|c|} \hline
$(n,m)$ & (50, 15) & (100, 15) & (100, 25) & (200, 30) & (200, 45)
\\ \hline
Iterations & 64 & 84 & 64 & 68 & 77
\\
CPU time (s) & 4.63 & 9.83 & 15.20 & 43.58 & 70.53
\\
$r(x_k, y_k, z_k)$ & 7.93e$-$07 & 9.21e$-$07 & 8.61e$-$07 & 9.80e$-$07 & 9.46e$-$07
\\
\hline
\end{tabular}
\label{tb:RSSQP}
\vspace{5mm}
\centering
\caption{Numerical results for the RSQP method on problem~\eqref{pro:MPEC}.}
\vspace{-2.5mm}
\begin{tabular}{|c||c|c|c|c|c|} \hline
$(n,m)$ & (50, 15) & (100, 15) & (100, 25) & (200, 30) & (200, 45)
\\ \hline
Iterations & 500 & 500 & 500 & 500 & 500
\\
CPU time (s) & 5.99 & 27.32 & 64.84 & 57.15 & 249.02
\\
$r(x_k, y_k, z_k)$ & 9.06e$-$02 & 2.90e$-$01 & 5.19e$-$01 & 2.57e$-$01 & 1.83e$+$00
\\
\hline
\end{tabular}
\label{tb:RSQP}
\vspace{5mm}
\centering
\caption{Numerical results for Algorithm~\ref{alg:main} on problem~\eqref{pro:LRMC}.}
\vspace{-2.5mm}
\begin{tabular}{|c||c|c|c|c|c|} \hline
$(q,s,p)$ & (5, 10, 2) & (10, 30, 3) & (15, 30, 4) & (20, 25, 4) & (20, 45, 7)
\\ \hline
Iterations & 19 & 35 & 79 & 86 & 74
\\
CPU time (s) & 0.47 & 12.20 & 66.08 & 80.72 & 314.42
\\
$r(x_k, y_k, z_k)$ & 4.52e$-$07 & 2.11e$-$07 & 9.22e$-$07 & 8.68e$-$07 & 9.66e$-$07
\\
\hline
\end{tabular}
\label{tb:RSSQP2}
\vspace{5mm}
\centering
\caption{Numerical results for the RSQP method on problem~\eqref{pro:LRMC}.}
\vspace{-2.5mm}
\begin{tabular}{|c||c|c|c|c|c|} \hline
$(q,s,p)$ & (5, 10, 2) & (10, 30, 3) & (15, 30, 4) & (20, 25, 4) & (20, 45, 7)
\\ \hline
Iterations & 9 & 35 & 67 & 141 & 135
\\
CPU time (s) & 0.21 & 12.13 & 54.67 & 129.19 & 572.35
\\
$r(x_k, y_k, z_k)$ & 4.85e$-$07 & 5.82e$-$07 & 6.18e$-$07 & 9.66e$-$07 & 9.27e$-$07
\\
\hline
\end{tabular}
\label{tb:RSQP2}
\end{table}

\section{Conclusion}\label{sec:conclusion}
In this paper, we proposed a stabilized SQP method for Riemannian nonlinear programming problems with equality and inequality constraints. The method combines stabilized quadratic subproblems, an augmented-Lagrangian merit function, and update rules designed to control the multiplier estimates. To the best of the authors' knowledge, this is the first stabilized SQP framework developed for constrained optimization on Riemannian manifolds. The convergence analysis establishes the existence of an accumulation point that is a KKT point, an AKKT point, or a stationary point of the associated feasibility problem. If the accumulation point satisfies the ERCQ, then it is a KKT point. Thus, the method provides a meaningful global convergence characterization even when no constraint qualification is imposed. The numerical results show that the proposed RSSQP method is clearly more effective than the existing RSQP method for all the tested degenerate MPEC instances on the unit sphere. Moreover, for the nonnegative low-rank matrix completion problems, the RSSQP method remained competitive with the RSQP method and outperformed it on the larger instances.
\par
An important direction for future research is to establish local superlinear and quadratic convergence of the proposed RSSQP method under suitable second-order and regularity assumptions.

\addcontentsline{toc}{section}{\refname} 

\bibliographystyle{abbrv}
\bibliography{refs}

\appendix

\section{Proofs} \label{sec:appendix}
The proof of Proposition~\ref{prop:solution} is as follows.
\begin{proof}
For simplicity, we omit the subscript $k$ and define ${\cal F} \colon T_{x} \M \times \R^{\ell} \to \R$ and ${\cal S} \subset T_{x} \M \times \R^{\ell}$ as follows:
\begin{align}
&{\cal F}(\xi,\eta) \coloneqq \langle \grad f(x) - \rmD g(x)^{\ast}[s], \xi \rangle_x + \frac{1}{2} \langle M[\xi], \xi \rangle_{x} + \frac{\sigma}{2} \Vert \eta \Vert^{2},
\\
&{\cal S} \coloneqq \{(\xi, \eta) \in T_{x} \M \times \R^{\ell} ; \rmD h(x)[\xi] + \sigma (\eta - t) \geq 0 \}.
\end{align}
Since $M$ is coercive, there exists $\ell_M > 0$ such that $\langle M [v], v \rangle_{x} \geq \ell_M \Vert v \Vert_{x}^2$ for any $v \in T_{x} \M$. Hence, we have
\begin{align}
\begin{aligned}\label{eq:Fbound}
{\cal F}(\xi,\eta) 
& \geq \frac{\ell_{M}}{2} \Vert \xi \Vert_{x}^2 + \frac{\sigma}{2} \Vert \eta \Vert^{2} - \Vert \grad f(x) - \rmD g(x)^{\ast}[s] \Vert_{x} \Vert \xi \Vert_{x}
\\
& = \frac{\ell_{M}}{2} \left(\Vert \xi \Vert_{x} - \frac{1}{\ell_M} \Vert \grad f(x)-\rmD g(x)^{\ast}[s] \Vert_{x} \right)^{2}
\\
&\hspace{18mm} + \frac{\sigma}{2} \Vert \eta \Vert^{2} - \frac{1}{2 \ell_M} \Vert \grad f(x)-\rmD g(x)^{\ast}[s] \Vert_{x}^{2}
\\
& \geq -\frac{1}{2\ell_M} \Vert \grad f(x)-\rmD g(x)^{\ast}[s] \Vert_{x}^{2}.
\end{aligned}
\end{align}
Thus, the function ${\cal F}$ is bounded below and coercive. Moreover, it is clear that $(0, t) \in {\cal S}$, that is, ${\cal S} \not= \emptyset$. These facts imply $-\infty < \inf \{ {\cal F}(\xi, \eta); (\xi, \eta) \in {\cal S} \} < \infty$. Then, there exists $\{ (\xi_j, \eta_j) \} \subset {\cal S}$ such that 
\begin{align}
\lim_{j \to \infty} {\cal F}(\xi_j,\eta_j) = \inf \{ {\cal F}(\xi,\eta); (\xi,\eta)\in {\cal S} \}, \label{eq:Fxieta}
\end{align}
which means that $\{ {\cal F}(\xi_j,\eta_j) \}$ is bounded. Since ${\cal F}$ is coercive, this fact implies that $\{(\xi_j,\eta_j)\}$ is also bounded. Hence, there exists a convergent subsequence $\{ (\xi_j, \eta_j) \}_{j \in {\cal P}} \subset \{ (\xi_j, \eta_j) \}$ such that $(\xi_j, \eta_j) \to (\widehat{\xi},\widehat{\eta})$ as $j \to \infty$, $j \in {\cal P}$. Meanwhile, because $\{ (\xi_j, \eta_j) \}_{j \in {\cal P}} \subset {\cal S}$ holds and ${\cal S}$ is closed, we have $(\widehat{\xi},\widehat{\eta}) \in {\cal S}$. It then follows from~\eqref{eq:Fxieta} that ${\cal F}(\widehat{\xi}, \widehat{\eta}) = \inf \{ {\cal F}(\xi, \eta); (\xi, \eta) \in {\cal S} \}$. Therefore, the solvability of~\eqref{pro:RNLOparch} is verified.
\par
Next, we show the latter assertion. To this end, we verify that each feasible point of~\eqref{pro:RNLOparch} satisfies the RCQ. Notice that the mapping $T_x \M \times \R^{\ell} \ni (\xi, \eta) \mapsto \rmD h(x)[\xi] + \sigma \eta \in \R^{\ell}$ is surjective, namely, the image of $T_x \M \times \R^{\ell}$ under this mapping is equal to $\R^{\ell}$. The fact ensures that each feasible point of~\eqref{pro:RNLOparch} meets the RCQ. Therefore, the KKT conditions are satisfied at $(\widehat{\xi}, \widehat{\eta})$.
\par
Since $M$ is self-adjoint and coercive and $\sigma>0$, the objective function $\mathcal F$ is strongly convex. Moreover, the feasible set $\mathcal S$ is convex. Therefore, problem~\eqref{pro:RNLOparch} has a unique optimal solution.
\end{proof}

\noindent
The proof of Lemma~\ref{lem:grad} is as follows.
\begin{proof}
For simplicity, we omit the subscript $k$. According to Proposition~\ref{prop:solution}, Problem~\eqref{pro:RNLOparch} has a unique optimal solution~$(\xi^{\ast}, \eta^{\ast})$ and it satisfies the KKT conditions of problem~\eqref{pro:RNLOparch}, that is,
\begin{align}
&M [\xi^{\ast}] + \grad f(x) - \rmD g(x)^{\ast}[s]- \rmD h(x)^{\ast} [\eta^{\ast}]=0, \label{eq:parchKKT1} 
\\
& \rmD h(x)[\xi^{\ast}] + \sigma(\eta^{\ast} - t) \geq 0,\label{eq:parchKKT2}
\\
& \eta^{\ast} \geq 0,\label{eq:parchKKT3}
\\
& \langle \eta^{\ast}, \rmD h(x) [\xi^{\ast}] + \sigma(\eta^{\ast} - t) \rangle=0. \label{eq:parchKKT4}
\end{align}
Noting~\eqref{eq:graddef} leads to
\begin{align*}
\langle  \grad F(x; \widehat{y}, \widehat{z}, \sigma), \xi^{\ast} \rangle_{x}
= \langle \grad f(x) - \rmD g(x)^{\ast}[s], \xi^{\ast} \rangle_{x} - \langle \rmD h(x)^{\ast}[[t]_{+}], \xi^{\ast} \rangle_x.
\end{align*}
Using~\eqref{eq:parchKKT1} and~\eqref{eq:parchKKT4} yields
\begin{align}
\langle \grad F(x; \widehat{y}, \widehat{z}, \sigma), \xi^{\ast} \rangle_x
& = - \langle M[\xi^{\ast}], \xi^{\ast} \rangle_x + \langle \eta^{\ast}, \rmD h(x) [\xi^{\ast}] \rangle - \langle [t]_{+}, \rmD h(x)[\xi^{\ast}] \rangle \nonumber
\\
&= - \langle M[\xi^{\ast}], \xi^{\ast} \rangle_x - \sigma \langle \eta^{\ast}, \eta^{\ast} - t \rangle - \langle [t]_{+}, \rmD h(x)[\xi^{\ast}] \rangle. \label{ineq:gradF_M_xi}
\end{align}
Meanwhile, combining~\eqref{eq:parchKKT2} and $[t]_{+} \geq 0$ implies $0 \leq \langle [t]_{+}, \sigma ( \eta^{\ast} - t) + \rmD h(x) [\xi^{\ast}] \rangle$. Thus, we have
\begin{align}
- \langle [t]_{+}, \rmD h(x)[\xi^{\ast}] \rangle 
&\leq \sigma \langle [t]_{+}, \eta^{\ast} - t \rangle \nonumber
\\
&= \sigma \langle [t]_{+} - \eta^{\ast}, \eta^{\ast} - t \rangle + \sigma \langle \eta^{\ast}, \eta^{\ast} - t \rangle \nonumber
\\
&= - \sigma \Vert [t]_{+} - \eta^{\ast} \Vert^2 + \sigma \langle [t]_{+} - \eta^{\ast}, [t]_{+} - t \rangle + \sigma \langle \eta^{\ast}, \eta^{\ast} - t \rangle \nonumber
\\
&\leq - \sigma \Vert [t]_{+} - \eta^{\ast} \Vert^2 + \sigma \langle \eta^{\ast}, \eta^{\ast} - t \rangle, \label{ineq:gradF_t_eta}
\end{align}
where the last inequality is derived from~\eqref{eq:parchKKT3} and the well-known property of the projection $[ \, \cdot \, ]_{+}$. It follows from~\eqref{ineq:gradF_M_xi} and~\eqref{ineq:gradF_t_eta} that
\begin{align*}
\langle \grad F(x; \widehat{y}, \widehat{z}, \sigma), \xi^{\ast} \rangle_x \leq - \langle M[\xi^{\ast}], \xi^{\ast} \rangle_x - \sigma \| \eta^{\ast} - [t]_+ \|^{2},
\end{align*}
that is,~\eqref{eq:gradient} is verified. Moreover, by exploiting~\eqref{eq:graddef},~\eqref{eq:parchKKT1}, and the definitions of $s$ and $t$, we obtain
\begin{align*}
0 &= M[\xi^{\ast}] + \grad f(x) - \rmD g(x)^{\ast}[s] - \rmD h(x)^{\ast}[\eta^{\ast}]
\\
& = M[\xi^{\ast}] + \grad F(x; \widehat{y}, \widehat{z}, \sigma) - \rmD h(x)^{\ast} \left[ \eta^{\ast} - [t]_+ \right].
\end{align*}
This equality yields
\begin{align*}
\grad F(x; \widehat{y}, \widehat{z}, \sigma) = - M[\xi^{\ast}] + \rmD h(x)^{\ast} \left[ \eta^{\ast} - [t]_+ \right],
\end{align*}
and hence~\eqref{eq:ineqabs} is satisfied. If $\grad F(x; \widehat{y}, \widehat{z}, \sigma) = 0$ holds, we easily see that $\xi^{\ast} = 0$ and $\eta^{\ast} = [t]_+$ by~\eqref{eq:gradient} and the coercivity of $M$. Namely, it is clear that $(0, [t]_+)$ is a unique optimum. Conversely, if $(0, [t]_+)$ is a unique optimum, it follows from~\eqref{eq:graddef} and~\eqref{eq:parchKKT1} that $\grad F(x; \widehat{y}, \widehat{z}, \sigma) = 0$.
\end{proof}

\end{document}